\newtheorem{theorem}{Theorem}[section]
\newtheorem{definition}[theorem]{Definition}
\newtheorem{corollary}[theorem]{Corollary}
\newtheorem{lemma}[theorem]{Lemma}
\newtheorem{remark}[theorem]{Remark}
\newtheorem*{remarks*}{Remarks}
\newtheorem{proposition}[theorem]{Proposition}
\newcommand\NN{\mathbb{N}}
\newcommand\QQ{\mathbb{Q}}
\newcommand\ZZ{\mathbb{Z}}
\newcommand\eps{\varepsilon}
\newcommand\diam{\mathrm{diam}}
\numberwithin{equation}{section}
\title[Numbers with matching continued fraction and decimal expansions]{On the existence of numbers with matching continued fraction and decimal expansions}
\author{Pieter Allaart}
\thanks{The first author is partially supported by Simons Foundation grant \# 709869. 
The second author is partially supported by NSF grant DMS-1800323. }
\address[P. Allaart]{Mathematics Department, University of North Texas, 1155 Union Cir \#311430, Denton, TX 76203-5017, U.S.A.}
\email{allaart@unt.edu}
\author{Stephen Jackson}
\address[S. Jackson]{Mathematics Department, University of North Texas, 1155 Union Cir \#311430, Denton, TX 76203-5017, U.S.A.}
\email{Stephen.Jackson@unt.edu}
\author{Taylor Jones}
\address[T. Jones]{Mathematics Department, University of North Texas, 1155 Union Cir \#311430, Denton, TX 76203-5017, U.S.A.}
\email{RandallJones2@my.unt.edu}
\author{David Lambert}
\address[D. Lambert]{Mathematics Department, University of North Texas, 1155 Union Cir \#311430, Denton, TX 76203-5017, U.S.A.}
\email{DavidLambert2@my.unt.edu}
\begin{document}

\begin{abstract}
A Trott number is a number $x\in(0,1)$ whose continued fraction expansion is equal to its base $b$ expansion for a given base $b$, in the following sense: If $x=[0;a_1,a_2,\dots]$, then $x=(0.\hat{a}_1\hat{a}_2\dots)_b$, where $\hat{a}_i$ is the string of digits resulting from writing $a_i$ in base $b$. In this paper we characterize the set of bases for which Trott numbers exist, and show that for these bases, the set $T_b$ of Trott numbers is a complete $G_\delta$ set. We prove moreover that the union $T:=\bigcup_{b\geq 2} T_b$ is nowhere dense and has Hausdorff dimension less than one. Finally, we give several sufficient conditions on bases $b$ and $b'$ such that $T_b\cap T_{b'}=\emptyset$, and conjecture that this is the case for all $b\neq b'$. This question has connections with some deep theorems in Diophantine approximation.
\end{abstract}

\subjclass[2010]{Primary:11A63, 11A55; Secondary: 11D75, 11J86, 28A78}
\keywords{Trott number, Continued fraction, Decimal expansion, Hausdorff dimension, Baker's theorem}

\maketitle

\section{Introduction}

An area of general interest in number theory, dynamics, and numeration systems concerns the 
study of the itinerary of a point in a dynamical system. Given a Polish space 
$X$ (a complete, separable, metric space), a continuous map $T\colon X\to X$, and a partition 
$\{ X_i\}_{i \in \mathcal{D}}$ of $X$ into a finite or countably infinite number of pieces, 
each $x \in X$ produces its forward orbit $(T^0(x),T^1(x),T^2(x),\dots)$, where $T^0(x)=x$. 
The itinerary of $x$ is the sequence $(i_0,i_1,i_2,\dots)$ where $T^k(x)\in X_{i_k}$. 
The index set $\mathcal{D}$ of the partition is the digit set for the itinerary. 
Most numeration systems for real numbers arise in this manner, such as base-$b$ expansions,
continued fractions, $\beta$-expansions, etc.\ One general question which arises is to what extent
can the itineraries of a point under two different dynamical systems on $X$ be similar? 
For this paper, the two numeration systems of interest are continued fractions and base-$b$ 
expansions. These systems, however,  have different digit sets, with continued fractions using 
$\mathcal{D}=\mathbb{N}:=\{ 1,2,3,\dots\}$, and base-$b$ expansions using $\mathcal{D}=\{ 0,1,\dots, b-1\}$. 
So, we must adopt a reasonable convention as to how we compare two such expansions. This will be done 
in a natural manner, by writing each digit $a_i$ in the continued fraction expansion in base-$b$
and then concatenating these strings (we give the precise definition below). 
The question of particular interest for us is whether a real number $x \in (0,1)$ can have 
a continued fraction expansion and a base-$b$ expansion which match in this manner. 

This question, or actually a slight variation of it, was first asked by M.\ Trott in 1999. 
Trott asked whether there exist numbers whose continued fraction expansion equals their decimal expansion. In other words, is there a sequence $a_1,a_2,\dots$ of single-digit numbers such that
\[
[0;a_1,a_2,a_3,\dots]:=\cfrac{1}{a_1+\cfrac{1}{a_2+\cfrac{1}{a_3+\dots}}}=0.a_1 a_2 a_3\dots?
\]
In 2006 \cite{Trott} he published his discovery,
\[
x=[0;1,0,8,4,1,0,1,5,1,2,2\dots]=0.10841015122\dots,
\]
along with a computer algorithm for finding such numbers.
Of course, use of the digit zero is normally not allowed in continued fraction expansions, and is rather unnatural from a dynamical point of view. Nonetheless, the number $x$ above is now known as {\em Trott's constant} \cite{OEIS}. 

In this paper, we tweak Trott's original question and ask for solutions without the digit zero, but instead allowing multi-digit partial quotients. (It is not hard to see that no such example exists if one requires that $a_i\in\{1,2,\dots,9\}$ for every $i$.) For instance, in base $10$ it appears that one could start with
$$x=[0;3,29,54,78,\dots]=0.3295478\dots.$$
This problem has led to some surprisingly rich and intricate mathematics. While we were able to construct some examples of such numbers (which, in honor of M. Trott, we call {\em Trott numbers} here), we did so only after an initial struggle, due mainly to the incompatibility of the dynamical systems that generate the continued fraction expansion and decimal expansion, respectively. We found that Trott numbers exist in some bases, but not in others. Many interesting questions remain unanswered.


We begin with a formal definition. For definiteness, $\NN=\{1,2,3,\dots\}$ will denote the set of all positive integers in this article.

\begin{definition}
Let $b\in\NN$ with $b\geq 2$. We call a number $x$ a {\em Trott number} in base $b$ if $x$ has an infinite continued fraction expansion $x=[0;a_1,a_2,a_3,\dots]=(0.\hat{a}_1\hat{a}_2\hat{a}_3\dots)_b$, where $\hat{a}_i$ is the string of digits resulting from writing $a_i$ in base $b$.
\end{definition}

It follows immediately that Trott numbers, if they exist, cannot be rational or quadratic irrational. At first glance, it appears easy to construct Trott numbers.
Take $b=10$ and start with $[0;3,29]=0.329\overline{54}$. Then we simply take as many subsequent digits as desired to be the next term in the continued fraction.
For instance, $[0;3,29]$ could be extended to $[0;3,29,545]=0.3295456\dots$, and so on. 
The problem with this simple approach is that some of these  numbers fail to admit a continuation. For example, the number $[0;3,29,5,7]=0.3295703\dots$ can not be extended, as $0.3295703<[0;3,29,5,7,a]<0.3295705$ for every $a\in\NN$.
Zeros appearing in the decimal expansion are dealt with by being absorbed as a non-leading digit in the continued fraction expansion.
So in our example, even though $[0;3,29,5,7]$ can not be extended, $[0;3,29,5]$ can, since $[0;3,29,5,710]=0.32957109\dots$. However, to guarantee that such a construction can be continued indefinitely requires some ingenuity, because of the possibility of very long strings of zeros eventually appearing in the expansion of $[0;a_1,\dots,a_n]$.

Let $T_b$ denote the set of all Trott numbers in base $b$. Our first main result is:

\begin{theorem} \label{thm:main}
There exists a Trott number in base $b$ if and only if
$$b\in\Gamma:=\{3\}\cup\bigcup_{k=1}^\infty \{k^2+1,k^2+2,\dots,k^2+k\}.$$
Furthermore, if $T_b$ is nonempty, then it is uncountable.
\end{theorem}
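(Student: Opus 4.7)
The plan is to attack the two directions of the ``if and only if'' separately, and then uncountability.

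\emph{Necessity.} The key combinatorial observation is a compatibility constraint on each partial quotient: if $a\in\NN$ has $\ell$ base-$b$ digits and occurs as a partial quotient of a Trott number, then the base-$b$ cylinder $[a/b^\ell,(a+1)/b^\ell)$ and the continued-fraction cylinder $(1/(a+1),1/a)$ must intersect, which is equivalent to $a^2<b^\ell<(a+1)^2$, i.e., $b^\ell$ lies strictly between two consecutive squares. If $b$ is a perfect square, $b^\ell$ is always a square, so no $a_1$ is admissible and $T_b=\emptyset$. If $b$ is not a square but $b>k(k+1)$ with $k=\lfloor\sqrt{b}\rfloor$, even $\ell$ is excluded by the same square argument, odd $\ell\geq 3$ yields the incompatibility $b^{\ell-1}\leq a<b^{\ell/2}$, and only $\ell=1$ with $a_1=k$ survives. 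With $a_1=k$ fixed, the admissible region shrinks to the clipped cylinder $(1/(k+1),(k+1)/b)$; carrying the compatibility inequality one step further for $a_2=j$ (of any length in base $b$) produces polynomial inequalities in $j$ which are unsatisfiable precisely when $b>k(k+1)$. Hence no $a_2$ exists and $T_b=\emptyset$.

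\emph{Sufficiency and uncountability.} For $b\in\Gamma$ I would construct a tree $\mathcal{T}_b$ of admissible tuples $(a_1,\dots,a_n)$: the tuple is admissible when the intersection $I_n$ of the CF cylinder $\{[0;a_1,\dots,a_n,y]:y\in(0,1)\}$ with the base-$b$ cylinder of $\hat{a}_1\cdots\hat{a}_n$ is nonempty. One first verifies that $(\lfloor\sqrt{b}\rfloor)$ is admissible (take $a_1=1$ for $b=2,3$). The central inductive claim is that, starting from any admissible node of sufficiently large depth, at least two admissible children can be produced by varying the length $\ell_{n+1}$ of $\hat{a}_{n+1}$: a short $\hat{a}_{n+1}$ works when $I_n$ sits cleanly inside one of the $b$ sub-cylinders, while a longer $\hat{a}_{n+1}$ ``absorbs'' any run of zero digits at the boundary of $I_n$ (the zero-absorption phenomenon highlighted in the introduction). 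The lower-half hypothesis $b\leq k(k+1)$ translates into a quantitative lower bound on the width of $I_n$ guaranteeing that two (indeed infinitely many) such extensions exist at every sufficiently deep admissible node. The resulting Cantor-like subtree then yields, by nested compactness, $2^{\aleph_0}$ distinct Trott numbers.

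\emph{Main obstacle.} The delicate step is the inductive production of multiple admissible extensions. This requires a careful case analysis on the first base-$b$ digit just past $\hat{a}_1\cdots\hat{a}_n$ in the base-$b$ expansion of (say) the left endpoint of $I_n$, together with a width estimate on $I_n$ to ensure the corresponding CF cylinders actually fit inside. The hypothesis $b\leq k(k+1)$ is precisely what makes these widths cooperate. The exceptional base $b=3$, which violates the lower-half inequality (since $3>k(k+1)=2$ for $k=1$) yet is declared in $\Gamma$, must be handled by a more hands-on calculation within the specific geometry of base-$3$ cylinders in $(1/2,2/3)$; branching still holds because multi-digit partial quotients densely populate every CF subinterval, furnishing the required alternatives.
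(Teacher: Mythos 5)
Your necessity argument is in the right spirit, though the ``compatibility constraint on each partial quotient'' as you state it (that the standalone cylinders $(1/(a+1),1/a)$ and $[a/b^\ell,(a+1)/b^\ell)$ must meet) is only correct for $a_1$; for $a_n$ with $n\geq 2$ the relevant constraint is on the joint cylinders, which depend on the earlier partial quotients. You only use the claim for $a_1$, so this is a misstatement rather than a fatal error, and the conclusion that $a_1=\lfloor\sqrt{b}\rfloor$ is forced, together with a case analysis showing no admissible $a_2$ when $k^2+k<b\leq(k+1)^2$, is essentially what the paper does in Section~2.

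The sufficiency argument, however, has a genuine gap. Your ``central inductive claim'' --- that from \emph{any} admissible node of sufficiently large depth one can produce at least two (indeed infinitely many) admissible children --- is precisely the hard part, and it is asserted without proof. It is also very likely false as stated: the paper's introduction gives the example $[0;3,29,5,7]$, an admissible tuple that cannot be extended, and nothing in your argument rules out that a ``sufficiently deep'' admissible node could be equally stuck (e.g., the remainder of the base-$b$ expansion on all of $I_n$ could begin with a run of zeros longer than $I_n$ can ``absorb'' within its CF-cylinder constraints). The paper does not prove the universal extension property you need. Instead, it constructs a \emph{specific} chain of admissible tuples maintaining several strong invariants (conditions (i)--(v) in Section~3.2): the convergent $x_n=p_n/q_n$ must have a non-terminating, eventually periodic base-$b$ expansion whose periodic part starts exactly at position $s_n$, the digit immediately past $\hat a_1\cdots\hat a_n$ must be bounded away from zero by a quantitative margin ($z_n>1/b+\sum_{i\geq n}b^{-i}$), $q_n$ must satisfy a growth bound relative to $s_n$, and so on. The choice of $a_2$ is not ``any length that makes $I_2$ nonempty'' but the single carefully engineered value in \eqref{eq:a2}, whose verification occupies most of Section~3.2 (this is where $b\leq k^2+k$ actually enters, in Lemma~\ref{lem:j} and in the integrality argument near \eqref{eq:crucial-sandwich}); and $a_{n+1}$ is chosen to encompass one or more full periods of the periodic expansion of $x_n$ so that periodicity, coprimality with $b$, and the digit-separation invariant propagate. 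None of this is captured by a ``width of $I_n$'' estimate. So while your tree-and-branching picture is the right intuition for uncountability (the paper exploits the freedom in the period length $m$ in the same spirit), the inductive step you lean on is unjustified and would not survive contact with the zero-run obstruction without the machinery the paper builds.
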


Thus, the first several bases that admit Trott numbers are $2$, $3$, $5$, $6$, $9$, $10$, $11$, $12$, $17$, $18$, $19$, $20$, $26$, $27,\dots$.
We suspect that $T_b$ actually has positive Hausdorff dimension for $b\in\Gamma$. Unfortunately, our construction falls well short of allowing us to draw this stronger conclusion. We are, however, able to classify the set $T_b$ with respect to the Borel hierarchy:

\begin{theorem} \label{thm:G-delta}
For each $b\in\Gamma$, $T_b$ is a complete $G_\delta$ set. (That is, $T_b$ is $G_\delta$ but not $F_\sigma$.) 
\end{theorem}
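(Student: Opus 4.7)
The $G_\delta$ half admits a clean abstract proof. Let $\mathrm{Irr} := (0,1) \setminus \QQ$, let $\Sigma_\NN := \NN^\NN$ and $\Sigma_b := \{0, 1, \dots, b-1\}^\NN$ with their product topologies, and consider the continued fraction digit map $f : \mathrm{Irr} \to \Sigma_\NN$, the base-$b$ digit map $g : \mathrm{Irr} \to \Sigma_b$, and the concatenation map $\phi : \Sigma_\NN \to \Sigma_b$ sending $(a_i)$ to $\hat{a}_1 \hat{a}_2 \cdots$. All three are continuous. Since every Trott number is irrational, $T_b = \{x \in \mathrm{Irr} : g(x) = \phi(f(x))\}$, the preimage of the diagonal in $\Sigma_b \times \Sigma_b$ under a continuous map, hence closed in $\mathrm{Irr}$. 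As $\mathrm{Irr}$ is itself $G_\delta$ in $\RR$, so is $T_b$.

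For the non-$F_\sigma$ half, I would run a Baire category argument in the Polish space $P := \overline{T_b}$. By construction $T_b$ is a dense $G_\delta$ subset of $P$, hence comeager. Suppose toward a contradiction that $T_b = \bigcup_n F_n$ with each $F_n$ closed in $\RR$; since $F_n \subseteq T_b \subseteq P$, each $F_n$ is also closed in $P$. If each $F_n$ had empty interior in $P$ it would be nowhere dense, making $T_b$ meager and contradicting comeagerness; thus some $F_n$ must contain a nonempty set $V$ that is open in $P$. The contradiction then reduces to the claim that $P \setminus T_b$ is dense in $P$, i.e.\ every nonempty relatively open subset of $P$ contains a non-Trott number.

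To locate non-Trott points of $P$ near a given $x = [0; a_1, a_2, \dots] \in T_b$, I first observe that any $y \in \overline{T_b} \setminus T_b$ must be rational: if $y$ were irrational and $x^{(k)} \to y$ with $x^{(k)} \in T_b$, the continuity of $f$, $g$, $\phi$ on $\mathrm{Irr}$ and the identity $g(x^{(k)}) = \phi(f(x^{(k)}))$ would pass to the limit, giving $y \in T_b$. Thus $\overline{T_b} \setminus T_b \subseteq \QQ$, and the natural candidates for rationals in $P$ close to $x$ are the convergents $p_n/q_n = [0; a_1, \dots, a_n]$, which satisfy $|x - p_n/q_n| < 1/q_n^2 \to 0$. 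The convergent $p_n/q_n$ belongs to $P$ precisely when the prefix $(a_1, \dots, a_n)$ admits Trott extensions with next partial quotient $c \to \infty$, for then the Trott continuations accumulate at $p_n/q_n$. Provided such levels $n$ occur arbitrarily deep along the CF expansion of every Trott number, density of $P \setminus T_b$ in $P$ follows, completing the argument.

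The main obstacle is precisely this combinatorial branching statement: one must establish that for every Trott $x$ there are infinitely many indices $n$ at which the prefix $(a_1(x), \dots, a_n(x))$ has arbitrarily large valid Trott continuations. Uncountability of $T_b$ alone does not imply this, since a finitely-branching tree can still have continuum many infinite branches; the verification must therefore come from the explicit recursive construction used to prove Theorem~\ref{thm:main}, where the flexibility required to absorb arbitrarily long strings of zeros in the base-$b$ expansion into a single large partial quotient should force infinite branching at infinitely many levels of every admissible sequence.
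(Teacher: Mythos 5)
Your $G_\delta$ argument is correct and is a nice abstract repackaging of the paper's proof. The paper writes $T_b = \bigcap_n G_n$ with $G_n := \bigcup_{(a_1,\dots,a_n)} I(a_1,\dots,a_n)\cap J(a_1,\dots,a_n)$ a union of open intervals; your observation that $T_b$ is the preimage of the diagonal of $\Sigma_b\times\Sigma_b$ under the continuous map $x\mapsto(g(x),\phi(f(x)))$ on $\mathrm{Irr}$, hence closed in the $G_\delta$ set $\mathrm{Irr}$, is essentially the same fact phrased in topological rather than interval language, and it has the small advantage of making the irrationality of Trott numbers do the work explicitly.

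For the non-$F_\sigma$ half, however, there is a genuine gap, and it is precisely the one you flag at the end. Your Baire category argument reduces everything to the claim that $P\setminus T_b$ is dense in $P=\overline{T_b}$, and you reduce that in turn to the assertion that \emph{every} Trott number has, cofinally often along its CF expansion, prefixes admitting arbitrarily large valid continuations (so that the convergents $p_n/q_n$ lie in $P\setminus T_b$). This is not established anywhere, and it does not follow from the existence proof: the recursive construction in Section~3 manufactures a particular subfamily of Trott numbers with this branching property, but it says nothing about Trott numbers not produced by that algorithm. A priori there could be a Trott number whose tree of valid continuations is finitely branching past some level, or for which $p_n/q_n\notin\overline{T_b}$ for all large $n$, and your argument would not produce any non-Trott point of $P$ near it. In short, you need a universal statement about all of $T_b$, whereas the construction only gives an existential one. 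The paper avoids this entirely by running a Wadge reduction: it builds a single continuous $f:2^\omega\to\RR$ with $f^{-1}(T_b)=H$ (the complete $G_\delta$ set of sequences with infinitely many $1$'s), following one thread of the Section~3 algorithm, stalling when a $0$ appears and extending the CF when a $1$ appears. This only requires the already-proved flexibility of the construction on the specific prefixes it encounters, never a property of arbitrary Trott numbers, which is why it closes the argument cleanly. To salvage your approach you would either have to prove the universal branching lemma (which looks hard and may require new ideas) or, more realistically, restrict the Baire argument to the closure of the algorithmically constructed subfamily --- at which point you are essentially reconstructing the paper's reduction in Baire-category clothing.
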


Our next result shows, in two different ways, that the set of numbers which are Trott in {\em any} base is quite small.

\begin{theorem} \label{thm:upper-bound}
Let $T:=\bigcup_{b\geq 2} T_b$. Then $T$ is nowhere dense and $\dim_H T<1$, where $\dim_H T$ denotes the Hausdorff dimension of $T$.
\end{theorem}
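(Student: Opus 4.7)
The plan is to reduce both conclusions to a single ``bounded-branching'' lemma governing the growth of the Trott digit tree. Fix $b\in\Gamma$ and write $x=[0;a_1,a_2,\dots]=(0.\hat{a}_1\hat{a}_2\dots)_b$. Set $\alpha_n=[0;a_1,\dots,a_n]=p_n/q_n$, $\beta_n=(0.\hat{a}_1\cdots\hat{a}_n)_b$, and let $L_n$ denote the total number of base-$b$ digits in $\hat{a}_1\cdots\hat{a}_n$. Since $x$ lies simultaneously in the $n$-th continued-fraction cylinder (of length $\approx q_n^{-2}$) and in the base-$b$ cylinder of length $b^{-L_n}$, one has
\[
|\alpha_n-\beta_n|\;<\;q_n^{-2}+b^{-L_n}.
\]
The lemma I would prove states that, for any admissible prefix $(a_1,\dots,a_{n-1})$, only an absolute constant $C$ of integers $a_n$ (summed over all choices of the digit-length $k_n=|\hat{a}_n|$) satisfy this inequality. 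The argument is a discrete derivative comparison: incrementing $a_n$ by one shifts $\alpha_n$ by $\pm 1/(q_{n-1}q_n)$ and $\beta_n$ by $b^{-L_n}$, so $\alpha_n-\beta_n$ moves by at least $\max(q_n^{-2},b^{-L_n})$ per unit step, and only $O(1)$ integer increments can keep the difference within the tolerance.

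For Hausdorff dimension, the lemma bounds the number of admissible continued-fraction prefixes of depth $n$ by $C^n$, each contained in a base-$b$ cylinder of diameter at most $b^{-L_n}\leq b^{-n}$, so $\dim_H T_b\leq \log C/\log b$. For $b\geq 3$ this is already strictly less than $1$ once $C$ is treated as an absolute constant (the explicit branching analysis suggests $C$ of order $2$). For $b=2$ a refinement is required: one uses that a Trott expansion cannot consist entirely of $a_i=1$'s (that would single out the golden mean, which fails the matching), so $\liminf_n L_n/n \geq \gamma$ for some $\gamma>1$; this yields $\dim_H T_2\leq \log C/(\gamma\log 2)<1$. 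Since the Hausdorff dimension of a countable union equals the supremum, $\dim_H T=\sup_{b\in\Gamma}\dim_H T_b<1$.

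For the nowhere-dense conclusion, a geometric analysis of $a_1$ confines $T_b$ to a narrow strip. If $a_1$ has $k$ base-$b$ digits, the base-$b$ cylinder $[a_1/b^k,(a_1+1)/b^k)$ meets the continued-fraction cylinder $(1/(a_1+1),1/a_1)$ only when $a_1^2<b^k<(a_1+1)^2$; combined with $a_1\in[b^{k-1},b^k)$ this forces $k=1$ and $a_1=\lfloor\sqrt{b}\rfloor$. Setting $d=\lfloor\sqrt{b}\rfloor$, one obtains $T_b\subset[1/(d+1),1/d)$, and the bases $b\in\Gamma$ with the same $d$ form a finite set whose Trott sets all live in the strip $[1/(d+1),1/d)$; distinct values of $d$ give disjoint strips. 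The bounded-branching lemma ensures each individual $T_b$ is nowhere dense: within any continued-fraction cylinder containing a Trott number, only $\leq C$ of the infinitely many CF child cylinders contain any Trott extension, so almost all of them give Trott-free sub-intervals. A finite union of nowhere-dense sets is nowhere dense, so $T\cap[1/(d+1),1/d)$ is nowhere dense in its strip for every $d\geq 1$; since any open subinterval of $(0,1)$ meets only these strips, $T$ is nowhere dense in $(0,1)$.

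The main obstacle is quantifying the branching constant $C$ uniformly in $b$, in the prefix $(a_1,\dots,a_{n-1})$, and in the digit-length $k_n$. The delicate regime is when $q_n^{-2}$ and $b^{-L_n}$ are of comparable size: both cylinder constraints are then active and the count of integer solutions $a_n$ must be controlled by handling the explicit quadratic in $a_n$ arising from $p_n=a_np_{n-1}+p_{n-2}$ and $q_n=a_nq_{n-1}+q_{n-2}$. A secondary obstacle is obtaining a uniform dimension bound for $b=2$, where $\log C/\log 2$ is borderline; this requires a supplementary estimate on the digit-growth rate $L_n/n$ along Trott continued fractions, most plausibly by showing that long runs of consecutive $a_i=1$ are incompatible with the Trott condition in base $2$.
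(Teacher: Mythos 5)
Your plan hinges on a ``bounded-branching lemma'' asserting that, for a fixed admissible prefix $(a_1,\dots,a_{n-1})$, only an \emph{absolute constant} $C$ of integers $a_n$ --- summed over \emph{all} digit-lengths $k_n$ --- can extend the prefix within $T_b$. This lemma is false, and the paper's own existence construction (Section 3) disproves it: at each stage $n$ the paper exhibits \emph{infinitely many} valid choices of $a_{n+1}$, one for each sufficiently large period-count $m$, all compatible with the same prefix. What is true (and is the paper's Lemma 5.1) is a one-per-magnitude statement: for each fixed digit-length $k$, at most one $a_n \in \{b^k,\dots,b^{k+1}-1\}$ can work. Because $k$ ranges over all of $\NN$, this yields countably infinite branching, not bounded branching. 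Your heuristic --- that incrementing $a_n$ shifts $\alpha_n$ by $1/(q_{n-1}q_n)$ while $\beta_n$ shifts by $b^{-L_n}$ --- breaks precisely at powers of $b$, where $|\hat{a}_n|$ jumps and the base-$b$ cylinder structure reorganizes itself, so the ``discrete derivative'' comparison fails across magnitude boundaries.

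As a consequence, the dimension bound $\dim_H T_b \le \log C / \log b$ does not follow: with infinitely many children at each node, a naive cover-counting bound of the form $C^n \cdot b^{-sn}$ is unavailable. What the paper does instead is combine the one-per-magnitude lemma with the fact that choosing a child of digit-length $k$ forces a contraction factor $\lesssim b^{-2k}$, and then estimates the resulting \emph{infinite} sum over digit-lengths. This produces the implicit zeta-type equation
\[
\frac{b^{2s}}{b^{2s}-1}\,\zeta(2s) = 2,
\]
whose root is $<1$ for every $b\ge 3$ (with a sharper ad hoc estimate for $b=2$). Your proposed separate mechanism for $b=2$ (ruling out long runs of $a_i=1$ to force $\liminf L_n/n>1$) is not developed and is not how the paper handles that case; note also that, even granting it, you would still be plugging into the false bounded-branching estimate. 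The nowhere-density half of your argument is essentially salvageable, since it only needs that within any given prefix cylinder there exist Trott-free children (guaranteed by one-per-magnitude) plus the observation that $T_b \subset [1/(\lfloor\sqrt b\rfloor+1),\,1/\lfloor\sqrt b\rfloor)$ confines all but finitely many bases away from any fixed open interval --- that part matches the paper's proof. But as written, the nowhere-density step is also phrased in terms of the false constant $C$, so it should be restated in the one-per-magnitude form.
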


In fact, our proof yields a simple equation involving the Riemann zeta function which can be solved numerically for any given $b$ to obtain a more precise upper bound for $\dim_H T_b$. In the limit as $b\to\infty$, these bounds decrease to about $0.8643$. We believe that in actual fact, $\dim_H T_b$ must tend to zero as $b\to\infty$, but this appears to be significantly harder to prove.

Finally, we investigate whether a number can ever be Trott in more than one base at once. We believe that the answer is negative, but have only been able to prove several partial results in this direction, which we collect in the following theorem. 

\begin{theorem} \label{thm:multiple-Trott}
Let $b, c\in\Gamma$ with $b<c$, and suppose at least one of the following holds:
\begin{enumerate}[(i)]
\item $\lfloor \sqrt{b} \rfloor \neq \lfloor \sqrt{c} \rfloor$;
\item $\gcd(b,c)>1$;
\item $c=b+1$;
\item $c=b+2$ and there exists $k\geq 3$ such that
\[
k^2<b\leq k^2+\sqrt{\frac{2k^2}{k-2}+1}-1;
\]
\item There exists $k\geq 2$ such that $b=k^2+1$ and $c=k^2+k$;
\item $b>1.185\times 10^{29}$.
\end{enumerate}
Then $T_b\cap T_c=\emptyset$.
\end{theorem}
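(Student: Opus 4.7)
The plan is to play the base-$b$ Trott constraint against the base-$c$ constraint and extract a contradiction from a small number of partial quotients, except in case (vi), where a deep Diophantine tool is required. I would begin by establishing that any $x\in T_b$ has first partial quotient $a_1=\lfloor\sqrt b\rfloor$. The point is that $\hat a_1$ cannot have two or more base-$b$ digits: if it did, then $x\le 1/a_1\le 1/b$, while the leading base-$b$ digit of $x$ is the leading digit of $\hat a_1$, forcing $x\ge 1/b$ and hence $x=1/b$, a rational. So $a_1\in\{1,\dots,b-1\}$, and combining $x\in(1/(a_1+1),1/a_1)$ with $x\in[a_1/b,(a_1+1)/b)$ gives $a_1^2<b<(a_1+1)^2$. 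Uniqueness of continued fractions then yields case (i) immediately.

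From now on I would assume $k:=\lfloor\sqrt b\rfloor=\lfloor\sqrt c\rfloor$, and write $b=k^2+j$, $c=k^2+j'$ with $1\le j<j'\le k$ (the pair $\{b,c\}=\{2,3\}$ sits inside case (iii)). With $a_1=k$, the Trott identity $1/(k+G(x))=(k+S_b(x))/b$, where $G$ is the Gauss map and $S_b$ the base-$b$ shift, yields
\[
b=\bigl(k+G(x)\bigr)\bigl(k+S_b(x)\bigr),
\]
and an analogous equation for $c$ with the same Gauss iterate. This pair forces $G(x)$ and $S_b(x)$ into highly constrained intervals, and iterating once more with the second partial quotient $a_2$---tracking the number of base-$b$ and base-$c$ digits of $a_2$, and matching the leading digit of $\hat a_2$ with the second digit of $x$ in each base---tightens those intervals further.

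Cases (ii)--(v) I would then handle by tailored interval comparisons. In (iii), where $c=b+1$, a direct width estimate at the second partial quotient shows that the admissible intervals for $G(x)$ produced by the base-$b$ and base-$c$ Trott conditions do not overlap. Case (iv) is the same kind of argument with $c-b=2$, where the explicit inequality on $b$ is exactly what is needed to preserve disjointness when the base gap doubles. Case (v), the extremal pair $b=k^2+1$, $c=k^2+k$, exploits the maximum spread within a single level $\{k^2+1,\dots,k^2+k\}$ to force incompatible digit patterns at the second place. Case (ii), $\gcd(b,c)>1$, I would reduce to a congruence/interval obstruction by using the shared factor $d=\gcd(b,c)$ to impose additional integrality conditions on the integer parts produced by the second partial quotient in the two bases, which then refuses to coexist with the equations above.

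The main obstacle is case (vi), which demands a Diophantine tool. The plan is to iterate the simultaneous matching of the continued-fraction convergents $p_n/q_n$ of $x$ against long truncations of its base-$b$ and base-$c$ expansions, producing from a sufficiently deep match a nontrivial linear form $\Lambda=u\log b+v\log c$ with $u,v\in\ZZ$ not both zero and of controlled size, which is exponentially small in the match depth. Baker's theorem on linear forms in logarithms, in an effective version such as Matveev's, then yields a matching lower bound for $|\Lambda|$, and comparing the two forces $\max(b,c)\le 1.185\times 10^{29}$. The genuine difficulty here is arranging the Trott data so that the resulting linear form is demonstrably nonzero and so that the numerical constants come out sharp enough to reach the stated bound; by comparison, the interval analyses in (ii)--(v) are elementary if delicate, and (i) is just uniqueness of continued fractions.
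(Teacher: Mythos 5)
Your case (i) matches the paper, and the general intuition that the constraints should be played off against each other at the level of $a_2$ is on track. But the proposal is missing the central lemma that makes the remaining cases work, and without it the sketched arguments for (ii)--(vi) cannot be carried out as described.

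The linchpin of the paper's proof is a formula that completely determines $a_2$: if $x\in T_b$ and $a_2$ has $l$ digits in base $b$ (with $l\geq 2$, or $l\geq 3$ if $b=5$), then
\[
a_2=\left\lceil \frac{b^l(b-k^2)}{k}-\frac{b}{k(b-k^2)}\right\rceil-1,
\]
where $k=\lfloor\sqrt b\rfloor$. The proof of this lemma is itself the interval argument you gesture at---one shows that for any $a_2$ differing from this value, the second-level continued-fraction interval and the second-level base-$b$ digit interval are disjoint. But crucially, the disjointness argument lives entirely \emph{inside one base}: it pins $a_2$ to a unique integer as a function of $l$ and $b$. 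Once you have the formula in base $b$ (for some $l$) and the analogous formula in base $c$ (for some $m$), demanding that they yield the same integer produces a concrete Diophantine equation, roughly $|b^l(b-k^2)-c^m(c-k^2)|<k^2$. It is \emph{this} equation that is then killed by congruence arguments (modulo $b$ and $b+1$ for case (iii), modulo $k^2+j+1$ for case (iv), modulo $k^2$ for case (v)) or by divisibility (a common prime power divides the left side but the bound $k^2$ is too small, for case (ii)). Your description of (iii)--(v) as "admissible intervals for $G(x)$ do not overlap" would not go through: the base-$b$ and base-$c$ intervals for $x$ \emph{can} overlap if $l$ and $m$ are allowed to vary freely; the paper even notes that the first-level intervals always overlap. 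The obstruction is arithmetic, not metric.

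Two further gaps. First, cases (ii) and (vi) both require a lower bound on the number of digits $m$ of $a_2$ in the larger base, of the form $m>k\log k$, which the paper derives from the inequality $|b^l(b-k^2)-c^m(c-k^2)|<k^2$ together with $l\geq 3$. Without such a bound, neither the divisibility argument in (ii) nor the final numerical step in (vi) closes. Second, the linear form in logarithms for case (vi) is not $u\log b+v\log c$ but rather
\[
\Lambda=m\log c-l\log b+\log\!\left(\frac{c-k^2}{b-k^2}\right),
\]
a three-term form whose nonvanishing comes from first showing $b^l(b-k^2)\neq c^m(c-k^2)$; the extra term changes the heights fed into Matveev's estimate and is needed to match the stated numerical threshold. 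So your outline for (vi) has the right shape but would not, as written, produce a demonstrably nonzero linear form of the correct type. In short, you have the scaffolding but not the load-bearing beam: without the explicit $a_2$ formula and the resulting $|b^l(b-k^2)-c^m(c-k^2)|<k^2$ inequality, the cases beyond (i) do not reduce to checkable statements.
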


This paper is organized as follows. Section \ref{sec:non-existence} shows that $T_b=\emptyset$ when $b\not\in\Gamma$. Section 3 completes the proof of Theorem \ref{thm:main} by showing that $T_b\neq\emptyset$ for $b\in\Gamma$. Theorem \ref{thm:G-delta} is proved in Section \ref{sec:Borel}, and Theorem \ref{thm:upper-bound} in Section \ref{sec:upper-bound}. Finally, Theorem \ref{thm:multiple-Trott} is proved in Section \ref{sec:multiple-Trott}.

\section{Proof of non-existence for $b\not\in\Gamma$} \label{sec:non-existence}

A brief moment of reflection shows that the expansion of any Trott number must begin with $a_1=\lfloor\sqrt{b}\rfloor$. When $b=k^2$ is a perfect square, this implies that no Trott numbers exist because numbers whose continued fraction expansion begins with $k$ lie in $[1/(k+1),1/k]$, while numbers whose base $b$ expansion begins with $k$ lie in $[\frac{k}{b},\frac{k+1}{b}]=[\frac{1}{k},\frac{k+1}{k^2}]$, leaving only the {\em finite} expansion $[0;k]=0.k$.

Now fix $k\geq 2$ and suppose $b\in\{k^2+k+1,\dots,(k+1)^2\}$. The case $b=(k+1)^2$ was addressed above, so let $b<(k+1)^2$. We will show that there is no suitable choice for $a_2$. Note that for $j\in\NN$, the interval of numbers whose continued fraction expansion begins with $[0;k,j,\dots]$ is
$$I_j:=\left[\frac{1}{k+\frac{1}{j}},\frac{1}{k+\frac{1}{j+1}}\right].$$
If $j>\frac{b-1}{k}$, then 
$$\frac{1}{k+\frac{1}{j}}>\frac{b-1}{bk}\geq \frac{k+1}{b}$$
since $b\geq k(k+1)+1$, and so the base $b$ expansion of any number in $I_j$ begins with $0.(k+1)$. Therefore we must have $j\leq \frac{b-1}{k}$. We claim that this does not work either, since
\begin{equation} \label{eq:wrong-endpoints}
\frac{1}{k+\frac{1}{j}}>\frac{k}{b}+\frac{j+1}{b^2},
\end{equation}
so the second base $b$ digit of any number in $I_j$ is at least $j+1$.
For $j=1$, \eqref{eq:wrong-endpoints} is equivalent to $b^2>(k+1)(bk+2)$, or equivalently, $b\{b-k(k+1)\}>2k+2$. Since $b>k(k+1)$, the left hand side is increasing in $b$, and we get
$$b\{b-k(k+1)\}\geq k(k+1)+1=k^2+k+1>2k+2,$$
which holds for all $k\geq 2$.

Next, let 
$$f(x):=\frac{1}{k+\frac{1}{x}}-\frac{k}{b}-\frac{x+1}{b^2}=\frac{x}{kx+1}-\frac{k}{b}-\frac{x+1}{b^2},$$
and observe that
$$f'(x)=\frac{1}{(kx+1)^2}-\frac{1}{b^2}\geq 0 \qquad\mbox{for}\ x\leq\frac{b-1}{k}.$$
Hence $f$ is increasing on $[1,\frac{b-1}{k}]$, so for all $j$ in this interval, $f(j)\geq f(1)>0$, proving \eqref{eq:wrong-endpoints}.

\section{Proof of existence for $b\in\Gamma$} \label{sec:existence}

\subsection{The cases $b=2$ and $b=3$}

The case $b=3$ is special, since it does not fall in the range $\{k^2+1,\dots,k^2+k\}$ for any $k$. Nonetheless, there exist Trott numbers in base 3. (Note that the non-existence argument from the previous section breaks down when $k=1$.)
The case $b=2$ is also special, since our general proof below does not work for $k=1$. We give a separate argument here that addresses these two special bases. In hypothesis (ii) below, $d^l$ denotes the $l$-fold concatenation of the digit $d$ with itself.

\begin{lemma} \label{lem:once-started}
Let $b\geq 2$, and suppose there exist integers $n\geq 2$ and $l\geq 2$ and a finite sequence $(a_1,\dots,a_n)$ of positive integers such that:
\begin{enumerate}[(i)]
\item The expansion of $x_n:=[0;a_1,\dots,a_n]$ in base $b$ begins with $0.\hat{a}_1\dots\hat{a}_n$, and the digit immediately following this string is not zero;
\item The string $\hat{a}_1\dots\hat{a}_n$ contains a word $0w$, where $w$ has length $l$, $w$ does not begin with the digit $0$, $w\neq 1(0^{l-1})$ and $w\neq (b-1)^l$;
\item $q_n^2>b^{s_n+l+1}$, where $x_n=p_n/q_n$ in lowest terms and $s_n$ denotes the length (i.e. number of digits) of the string $\hat{a}_1\dots\hat{a}_n$;
\item $\gcd(q_{n-1},b)=\gcd(q_n,b)=1$.
\end{enumerate}
Then there exist uncountably many Trott numbers in base $b$.
\end{lemma}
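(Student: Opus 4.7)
The plan is to argue by induction: given any finite tuple $(a_1,\dots,a_n)$ satisfying (i)--(iv), I will produce at least two positive integers $a_{n+1}$ such that the extended tuple $(a_1,\dots,a_n,a_{n+1})$ again satisfies (i)--(iv). Iterating along the resulting binary tree of choices will yield $2^{\aleph_0}$ infinite sequences $(a_i)$, each defining a Trott number; distinct sequences give distinct irrational numbers by the uniqueness of the infinite continued fraction expansion, so $T_b$ will be uncountable.

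To perform the extension step, write the base-$b$ expansion of $x_n = p_n/q_n$ as $0.\hat{a}_1\dots\hat{a}_n d_{s_n+1} d_{s_n+2}\dots$; by (i) one has $d_{s_n+1}\neq 0$, and by (iv) this expansion is non-terminating and eventually periodic. I would define $a_{n+1}$ to be the positive integer whose base-$b$ representation is $d_{s_n+1}\dots d_{s_n+t}$ for a suitable $t\geq 2$, so that $\hat{a}_{n+1}$ naturally continues the string. The new convergent satisfies $|x_{n+1}-x_n| = 1/(q_n q_{n+1})$ with $q_{n+1} > b^{t-1} q_n$, and hypothesis (iii) then yields $|x_{n+1}-x_n| < b^{-(s_{n+1}+l)}$, where $s_{n+1} = s_n+t$. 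This guarantees that the first $s_{n+1}$ base-$b$ digits of $x_{n+1}$ coincide with those of $x_n$, provided no carry from position $\geq s_{n+1}+1$ propagates back across the prefix boundary; such a carry would require $d_{s_{n+1}+1},\dots,d_{s_{n+1}+l}$ to be all $(b-1)$ (when $x_{n+1}>x_n$) or all $0$ (when $x_{n+1}<x_n$), eventualities that can be excluded by an appropriate choice of $t$.

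Next I would verify that (i)--(iv) are inherited by $(a_1,\dots,a_{n+1})$. For (i), I pick $t$ with $d_{s_n+t+1}\neq 0$; since $\gcd(q_n,b)=1$ forces the tail expansion to be purely periodic and nonzero, every sufficiently long window contains such a $t$. For (ii), the word $0w$ in the old string is automatically contained in the new, longer string, so (ii) survives with the same $l$ and $w$. For (iii), $q_{n+1}^2 > b^{2t-2} q_n^2 > b^{s_n+l+2t-1} \geq b^{s_{n+1}+l+1}$ whenever $t\geq 2$. For (iv), $\gcd(q_{n+1},b) = \gcd(a_{n+1}q_n + q_{n-1},b) = 1$ rules out at most one residue class of $a_{n+1}$ modulo each prime $p\mid b$, and varying $t$ over a long enough range avoids all forbidden residues. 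Since the tail expansion of $x_n$ is eventually periodic but not eventually constant, infinitely many $t$ satisfy every constraint simultaneously, so at least two distinct $a_{n+1}$'s will be available at each step, seeding the binary branching.

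The principal obstacle will be establishing the simultaneous satisfiability of all of these constraints on $t$ in abundance: the nonzero-next-digit requirement for (i), the carry-blocking requirement for the prefix match, and the residue requirement for (iv) must jointly admit infinitely many solutions. Verifying this demands a careful analysis of the eventually periodic expansion of $b^{s_n} x_n \bmod 1$; this is where I expect the exclusions $w\neq 1(0^{l-1})$ and $w\neq (b-1)^l$ of hypothesis (ii) to intervene, presumably by ruling out degenerate configurations in which the long-run carry structure of the expansion would leave no room to maneuver around the forbidden values of $t$.
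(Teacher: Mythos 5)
Your overall plan matches the paper's: argue by induction, extending $(a_1,\dots,a_n)$ by an $a_{n+1}$ read off from the digits of $x_n$, with infinitely many choices at each step producing a Cantor set's worth of Trott numbers. The size estimates for (iii) and the bound $|x_{n+1}-x_n|<b^{-(s_{n+1}+l)}$ are also correct and match the paper's.

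However, there is a genuine gap at exactly the point you flag as ``the principal obstacle,'' and it is not merely a matter of bookkeeping. The paper's proof has one crucial device that your sketch is missing: it does not choose the cut-off $t$ by juggling several independent constraints, but instead \emph{ends $\hat{a}_{n+1}$ at the digit $0$ that begins an occurrence of the word $0w$}. Because $\hat{a}_1\dots\hat{a}_n$ contains $0w$ and (by (iv)) $x_n$ has a purely periodic expansion that repeats the initial block $\hat a_1\dots\hat a_{n_0}$ infinitely often, the tail $c_{s_n+1}c_{s_n+2}\dots$ of $x_n$ contains infinitely many occurrences of $0w$. For any such occurrence at positions $s_n+j,\dots,s_n+j+l$ with $j\ge 2$, one sets $\hat a_{n+1}=c_{s_n+1}\dots c_{s_n+j}$, so $c_{s_{n+1}}=0$ and $c_{s_{n+1}+1}\dots c_{s_{n+1}+l}=w$. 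This single choice dispatches \emph{all three} of your constraints at once: (a) $b\mid a_{n+1}$, so $q_{n+1}=a_{n+1}q_n+q_{n-1}\equiv q_{n-1}\ (\mathrm{mod}\ p)$ for every prime $p\mid b$, giving $\gcd(q_{n+1},b)=1$ without any residue-avoidance argument; (b) the digit at position $s_{n+1}+1$ of $x_n$ is $w_1\neq 0$; and (c) since $w\neq(b-1)^l$ and $w\neq 1(0^{l-1})$, neither a carry (if $x_{n+1}>x_n$) nor a borrow (if $x_{n+1}<x_n$) from the error term $<b^{-(s_{n+1}+l)}$ can reach position $s_{n+1}$ or kill the nonzero digit at position $s_{n+1}+1$.

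Two specific points in your sketch would fail as written. First, your verification of (i) for $n+1$ checks that $d_{s_n+t+1}\neq 0$ in the expansion of $x_n$, but condition (i) concerns the expansion of $x_{n+1}$; these digits can differ by a borrow, and ruling that out is exactly what the exclusion $w\neq 1(0^{l-1})$ is for. Your carry analysis (``all $(b-1)$ or all $0$'') controls propagation into the prefix but not the separate requirement that the digit just past the prefix remain nonzero. Second, your treatment of (iv) claims that varying $t$ over a long range avoids the forbidden residues $a_{n+1}\equiv -q_{n-1}q_n^{-1}\ (\mathrm{mod}\ p)$; but since $a_{n+1}(t)\equiv c_{s_n+t}\ (\mathrm{mod}\ p)$ for every $p\mid b$, one would need a single $t$ for which the digit $c_{s_n+t}$ simultaneously avoids a prescribed residue modulo each prime divisor of $b$, and there is no a priori reason the periodic digit string provides this. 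The paper sidesteps the problem entirely by always taking $c_{s_n+t}=0$. Without the ``cut at $0w$'' idea, your induction step is not actually constructed, and the two exclusions on $w$ in hypothesis (ii) never get used.
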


\begin{proof}
Fix a value $n_0$ for $n$ and an integer $l$ satisfying (i)-(iv) in the lemma.
Suppose statements (i)-(iv) hold for some $n\geq n_0$. We will now show that $a_{n+1}$ can be constructed so that (i)-(iv) hold also for $n+1$ in place of $n$. The lemma then follows by induction.

By (iv), the expansion of $x_n$ is purely periodic, i.e. of the form $x_n=0.\overline{d_1\dots d_m}$ for some word $d_1\dots d_m$. Hence by (i), it contains the initial string $\hat{a}_1\dots\hat{a}_{n_0}$ infinitely many times. In particular, by (ii), it contains the word $0w$ infinitely many times. In other words, writing $x_n$ as $x_n=0.c_1 c_2 c_3\dots$, there are infinitely many choices of $j\geq 2$ so that 
\begin{equation} \label{eq:word-match}
c_{s_n+j}c_{s_n+j+1}\dots c_{s_n+j+l}=0w.
\end{equation}
For any such $j$, we can take $\hat{a}_{n+1}=c_{s_n+1}\dots c_{s_n+j}$, that is,
$$a_{n+1}=\sum_{i=1}^j c_{s_n+i}b^{j-i}.$$
This is a valid choice of $a_{n+1}$, since $c_{s_n+1}\neq 0$ by (i). Furthermore, since $c_{s_n+j}=0$, $b$ divides $a_{n+1}$. Hence by (iv) and the recursion $q_{n+1}=a_{n+1}q_n+q_{n-1}$, $\gcd(q_{n+1},b)=1$. So (iv) holds for $n+1$ in place of $n$.

Next, we check (iii) for $n+1$ in place of $n$:
$$q_{n+1}^2>a_{n+1}^2 q_n^2\geq b^{2(j-1)}b^{s_n+l+1}=b^{s_n+2j+l-1}=b^{s_{n+1}+j+l-1}\geq b^{s_{n+1}+l+1},$$
where the second inequality follows since $a_{n+1}$ has $j$ digits, and the last inequality uses that $j\geq 2$. 

Finally, we observe that (iii) implies
\begin{align*}
|x_{n+1}-x_n|&=\left|\frac{p_{n+1}}{q_{n+1}}-\frac{p_n}{q_n}\right|=\frac{1}{q_n q_{n+1}}<\frac{1}{a_{n+1}q_n^2}\\
&<b^{-(j-1)}b^{-(s_n+l+1)}=b^{-(s_n+j+l)}=b^{-(s_{n+1}+l)}.
\end{align*}
This, together with \eqref{eq:word-match} and (ii), implies that the expansion of $x_{n+1}$ begins with $0.\hat{a}_1\dots\hat{a}_{n+1}$, and the digit immediately following this string (i.e. the $(s_{n+1}+1)$th) is not zero. Hence, we have (i) for $n+1$ in place of $n$. Of course (ii) holds trivially for $n+1$ in place of $n$ as well.

\bigskip
The induction step shows that the process can be continued indefinitely. Moreover, since at each stage $n$ we have infinitely many choices of $j$ (and hence of $a_{n+1}$), our procedure can generate continuum many Trott numbers.
\end{proof}

For $b=2$, we take $(a_1,a_2,a_3,a_4,a_5)=(1,4,13,36,5)$, or $(1,100,1101,100100,101)$ when written in base 2. Writing $x_n=p_n/q_n=[0;a_1,\dots,a_n]$, a direct computation shows $q_4=2381$ and $q_5=11971$. Furthermore, the binary expansion of $x_n$ begins with $0.1100110110010010111\ldots$. Hence the hypotheses (i), (ii) and (iv) of Lemma \ref{lem:once-started} are satisfied with $n=5$, $l=3$ and $w=110$. 
Since $s_5=17$, condition (iii) is also easily verified.

For $b=3$, we take $(a_1,a_2,a_3,a_4,a_5)=(1,1,44,144,4)$, or $(1,1,1122,12100,11)$ when written in base 3. A direct calculation shows that the ternary expansion of $[0;a_1,\dots,a_5]$ begins with $0.111122121001122\ldots$. Here $q_4=12818\equiv 2\ (\!\!\!\!\mod 3)$ and $q_5=51361\equiv 1\ (\!\!\!\!\mod 3)$. Hence hypotheses (i), (ii) and (iv) of Lemma \ref{lem:once-started} are satisfied with $n=5$, $l=2$ and $w=11$. Since $s_5=13$, condition (iii) is also easily checked.

\begin{remark}
{\rm
The simple method above can be used also for many other bases. However, finding a suitable initial string is a matter of trial and error, and there seems to be no clear general prescription for doing so in an arbitrary base $b$. Therefore, a somewhat more intricate argument is needed for the general case below.
}
\end{remark}

\subsection{The general case}

Fix $k\geq 2$ and let $k^2<b\leq k^2+k$. We have seen that $a_1=k$ is forced. It turns out that the key to proving that an infinite expansion $[0;a_1,a_2,\dots]$ can be constructed, is to choose $a_2$ very carefully. 

First, $1/k$ has an eventually periodic, non-terminating expansion in base $b$, which we write as
\begin{align}
\begin{split}
\frac{1}{k}&=0.c_1\dots c_l \overline{d_1\dots d_m}\\
&=\frac{1}{b^l}\left(\sum_{i=1}^l c_i b^{l-i}+\frac{1}{b^m-1}\sum_{i=1}^m d_i b^{m-i}\right).
\end{split}
\label{eq:base-b-expansion}
\end{align}
For example, if $b=18$ then $k=4$, and $\frac14=0.49=0.48\overline{17}$, where the reader should keep in mind that ``17" is a single digit.
We can rearrange the above as
\begin{equation} \label{eq:clear-fractions}
b^l(b^m-1)=k\left((b^m-1)\sum_{i=1}^l c_i b^{l-i}+\sum_{i=1}^m d_i b^{m-i}\right).
\end{equation}
Note that $m$ need not be the minimal period of $1/k$ in base $b$; we can choose $m$ to be any multiple of the minimal period, hence we can choose $m$ as large as we wish. In the same way, there is no unique choice for the number $l$. We choose $l$ so that $l\geq 1$.

Next, let
\begin{equation} \label{eq:j}
j:=\left\lfloor \sum_{i=1}^l c_i b^{l-i}+\frac{k}{b-k^2}-\frac{b^l-1}{k}+1\right\rfloor.
\end{equation}
This may look complicated, so we first prove a few things about this number $j$.

\begin{lemma} \label{lem:j}
We have $1\leq j\leq k+1$. Moreover, $j=k+1$ if and only if $b=k^2+1$.
\end{lemma}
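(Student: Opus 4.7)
My plan is to collapse the complicated-looking definition of $j$ into a single transparent real number, and then do elementary estimation. The key observation is that equation \eqref{eq:base-b-expansion} can be rewritten as $b^l/k = C + R$, where $C := \sum_{i=1}^l c_i b^{l-i}$ and $R := \frac{1}{b^m-1}\sum_{i=1}^m d_i b^{m-i}$ is the ``tail'' of the expansion, lying in $[0,1)$. A one-line manipulation gives the identity
$$C - \frac{b^l - 1}{k} \;=\; \frac{b^l}{k} - R - \frac{b^l - 1}{k} \;=\; \frac{1}{k} - R,$$
and substituting this into \eqref{eq:j} (using that $\lfloor y+1\rfloor = \lfloor y\rfloor + 1$) reduces the definition of $j$ to
$$j \;=\; 1 + \left\lfloor \frac{1}{k} - R + \frac{k}{b-k^2}\right\rfloor.$$

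With this simplification in hand, I would prove the bounds $1 \le j \le k+1$ by combining three elementary estimates: $1/k \in (0, 1/2]$ (since $k\ge 2$), $R \in [0,1)$, and $k/(b-k^2) \in [1,k]$ (since $k^2 < b \le k^2+k$). The floor argument is strictly positive because $1/k > 0$ and $k/(b-k^2) - R \ge 1 - 1 = 0$, so $j \ge 1$. For the upper bound, the floor argument is at most $1/k + k/(b-k^2) \le 1/k + k < k+1$, giving $j \le k+1$.

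For the characterization of equality, the forward direction is a marginally sharper version of the same estimate: if $b \ne k^2+1$, then $b-k^2 \ge 2$, so $k/(b-k^2) \le k/2$, and the floor argument is at most $1/k + k/2 \le (k+1)/2 < k$, forcing $j \le k$. For the converse, when $b = k^2+1$ I would compute the base-$b$ expansion of $1/k$ directly: since $k/(b-1) = k/k^2 = 1/k$, we have $1/k = 0.\overline{k}$ in base $b$, so regardless of the (admissible) choice of $l$ and $m$, the tail $R$ equals $1/k$ exactly. The floor argument then becomes $1/k - 1/k + k = k$, giving $j = k+1$ on the nose.

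The only genuinely clever step is spotting the algebraic identity $C - (b^l-1)/k = 1/k - R$ that compresses the definition of $j$ into something manageable; once that is found, the remainder is real-number bookkeeping and a single explicit computation of $1/k$ in base $k^2+1$. I do not anticipate any real obstacle.
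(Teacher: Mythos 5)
Your proof is correct and is essentially the paper's argument in a cleaner form: the bound $0 < R \le 1$ on the tail is exactly the paper's key inequality $1 \le b^l - k\sum_{i=1}^l c_i b^{l-i} \le k$ in disguise (since $b^l - k\sum_{i=1}^l c_i b^{l-i} = kR$), and the subsequent estimation using $1 \le k/(b-k^2) \le k$ is the same. The identity $j = 1 + \bigl\lfloor \tfrac{1}{k} - R + \tfrac{k}{b-k^2}\bigr\rfloor$ is a nice streamlining, and your explicit treatment of both directions of the equality case is slightly more complete than the paper's; the only small slip is that $R$ lies in $(0,1]$ rather than $[0,1)$ (e.g.\ $b=6$, $k=2$ gives $1/2=(0.2\overline{5})_6$ so $R=1$), but since your estimates only use $0\le R\le 1$ nothing breaks.
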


\begin{proof}
Since \eqref{eq:base-b-expansion} is the non-terminating expansion of $1/k$, we have
\begin{equation} \label{eq:one-over-k-interval}
\sum_{i=1}^l c_i b^{-i}<\frac{1}{k}\leq \sum_{i=1}^l c_i b^{-i}+b^{-l},
\end{equation}
and so
\begin{equation} \label{eq:important-bounds}
1\leq b^l-k\sum_{i=1}^l c_i b^{l-i}\leq k.
\end{equation}
The inequality on the right gives
$$k\sum_{i=1}^l c_i b^{l-i}+\frac{k^2}{b-k^2}-(b^l-1)\geq -k+1+\frac{k^2}{b-k^2}\geq 1,$$
using that $b\leq k^2+k$. Hence \eqref{eq:j} shows $j\geq 1$. On the other hand, the inequality on the left of \eqref{eq:important-bounds} gives
$$k\sum_{i=1}^l c_i b^{l-i}+\frac{k^2}{b-k^2}-(b^l-1)\leq \frac{k^2}{b-k^2}\leq k^2,$$
since $b\geq k^2+1$. Dividing by $k$ and using \eqref{eq:j} it follows that $j\leq k+1$. We see also that equality holds if and only if $b=k^2+1$, because in that case we have $1/k=0.\overline{k}$ so we can take $l=1$, $c_1=k$ and \eqref{eq:j} gives $j=k+1$.
\end{proof}

The next lemma is not strictly needed to prove the main theorem, but it provides useful additional information.

\begin{lemma} \label{lem:no-small-digits}
For $k^2<b<(k+1)^2$, all digits in the non-terminating base $b$ expansion of $1/k$ are at least $k$.
\end{lemma}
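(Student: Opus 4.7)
The plan is to analyze the base $b$ expansion of $1/k$ via the standard greedy algorithm, which tracks the residues of $b^n$ modulo $k$. Set $r_0:=1$, and for $n\geq 1$ define $c_n:=\lfloor b r_{n-1}/k\rfloor$ and $r_n:=b r_{n-1}-k c_n$, so that $r_n\in\{0,1,\dots,k-1\}$ is the residue of $b^n$ mod $k$ and $c_n$ is the $n$-th base $b$ digit of $1/k$. The key observation is that since $b>k^2$, whenever $r_{n-1}\geq 1$ we have
\[
\frac{b r_{n-1}}{k}\geq \frac{b}{k}>\frac{k^2}{k}=k,
\]
and hence $c_n=\lfloor b r_{n-1}/k\rfloor\geq k$.

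In the case where the expansion is non-terminating in the natural sense (i.e., $r_n>0$ for every $n$), every $r_{n-1}$ is in $\{1,\dots,k-1\}$ and the argument above immediately yields $c_n\geq k$ for all $n$. If instead the expansion terminates, say $r_N=0$ with $r_{N-1}\geq 1$ at the smallest such $N$, then the non-terminating expansion in the sense of the lemma is
\[
\tfrac{1}{k}=0.c_1\cdots c_{N-1}(c_N-1)\overline{(b-1)}.
\]
For $n<N$ the previous bound still gives $c_n\geq k$; for the modified digit at position $N$, the equality $c_N=b r_{N-1}/k$ together with $r_{N-1}\geq 1$ and $b>k^2$ shows $c_N>k$, hence $c_N-1\geq k$; and the tail digit $b-1\geq k^2\geq k$ since $k\geq 2$. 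So in every case the digits of the non-terminating expansion are at least $k$.

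There is essentially no obstacle beyond a careful treatment of the two cases (terminating vs.\ non-terminating), and I note that the hypothesis $b<(k+1)^2$ is never used — only $b>k^2$ is needed, so the conclusion is actually slightly more general than stated. The upper bound $b<(k+1)^2$ simply ensures that $k=\lfloor\sqrt{b}\rfloor$, which is the relevant value for the Trott-number application.
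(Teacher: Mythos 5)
Your proof is correct, and it takes a more algorithmic route than the paper's. You set up the long-division recursion $r_n = b\,r_{n-1} - k\,c_n$ with $c_n = \lfloor b\,r_{n-1}/k\rfloor$, and read off directly that $r_{n-1}\geq 1$ forces $c_n \geq b/k > k$; you then patch the terminating case by hand, replacing $c_N$ with $c_N-1$ and appending $\overline{(b-1)}$. The paper instead argues by contradiction: if some digit $c_j<k$, then the quantity $b^j - k\sum_{i=1}^{j-1}c_i b^{j-i}$ (which in your notation is exactly $b\,r_{j-1}$) is trapped strictly between $k c_j$ and $k(c_j+1)\leq k^2<b$, yet it is a positive multiple of $b$ — impossible. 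That formulation is a bit shorter because the sandwich $\sum_{i<j} c_i b^{-i} + c_j b^{-j} < 1/k \leq \sum_{i<j} c_i b^{-i} + (c_j+1)b^{-j}$, written with $\leq$ on the right, is precisely the defining property of the \emph{non-terminating} expansion, so the case $k\mid b^n$ is absorbed automatically and no split is needed. The underlying numerical fact is identical in both proofs (a nonzero remainder makes $b\,r_{j-1}\geq b>k^2$), so the difference is one of bookkeeping rather than substance. Your observation that only $b>k^2$ is used — the upper bound $b<(k+1)^2$ merely fixes $k=\lfloor\sqrt{b}\rfloor$ for the application — is correct and holds for the paper's argument as well.
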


\begin{proof}
Write the non-terminating expansion as $1/k=0.c_1 c_2 c_3\dots$. Suppose $c_j<k$ for some $j$. Then
$$\sum_{i=1}^{j-1}\frac{c_i}{b^i}+\frac{c_j}{b^j}<\frac{1}{k}\leq \sum_{i=1}^{j-1}\frac{c_i}{b^i}+\frac{c_j+1}{b^j},$$
and so
$$kc_j<b^j-k\sum_{i=1}^{j-1}c_i b^{j-i}\leq k(c_j+1).$$
But this is impossible, since the middle expression is a multiple of $b$, and $k(c_j+1)\leq k^2<b$.
\end{proof}

We now take
\begin{equation} \label{eq:a2}
a_2=\sum_{i=2}^l c_i b^{m+l-i}+\sum_{i=1}^m d_i b^{m-i}-j,
\end{equation}
with $j$ given by \eqref{eq:j}. Note by Lemma \ref{lem:no-small-digits} that, unless $b=k^2+1$, $j\leq k\leq d_m$ so subtracting $j$ only changes the last digit of $a_2$; in other words, $a_2$ has digits $c_2\dots c_l d_1\dots d_{m-1} (d_m-j)$. When $b=k^2+1$, however, the situation is slightly different. For instance, if $b=10$, then $k=3$ and $1/k=0.\overline{3}$, so we get $\hat{a}_2=33\dots 329$, where the number of 3's depends on the choice of $l$ and $m$.

Let
$$x_2:=\frac{p_2}{q_2}:=[0;a_1,a_2]=[0;k,a_2]=\frac{a_2}{ka_2+1}.$$
We must show that the first $l+m$ digits of $x_2$ are $c_1\dots c_l d_1\dots d_{m-1}(d_m-j)$ (appropriately modified as pointed out above in case $b=k^2+1$). We claim that, more strongly,
\begin{equation} \label{eq:crucial-sandwich}
\frac{k}{b}+\frac{a_2}{b^{l+m}}+\sum_{i=1}^\infty \frac{1}{b^{l+m+i}}<\frac{p_2}{q_2}<\frac{k}{b}+\frac{a_2+1}{b^{l+m}}
\end{equation}
for all sufficiently large $m$.

First, combining \eqref{eq:clear-fractions} and \eqref{eq:a2} and noting that $c_1=k$ gives
\begin{equation} \label{eq:a_2-identity}
b^l(b^m-1)=k\left(a_2+kb^{l+m-1}-\sum_{i=1}^l c_i b^{l-i}+j\right),
\end{equation}
so that
\begin{equation} \label{eq:q2}
q_2=ka_2+1=b^l(b^m-1)-k^2 b^{l+m-1}+k\sum_{i=1}^l c_i b^{l-i}-kj+1.
\end{equation}
It follows that
$$\lim_{m\to\infty}\frac{q_2}{b^{l+m}}=1-\frac{k^2}{b}=\frac{b-k^2}{b}.$$
Hence we obtain
\begin{align*}
b^{l+m}\left(\frac{a_2}{ka_2+1}-\frac{k}{b}-\frac{a_2}{b^{l+m}}\right)
&=\frac{b^{l+m}}{k}\left(1-\frac{1}{ka_2+1}\right)-(a_2+kb^{l+m-1})\\
&=\frac{b^{l+m}}{k}\left(1-\frac{1}{ka_2+1}\right)-\sum_{i=1}^l c_i b^{l-i}+j-\frac{b^l(b^m-1)}{k}\\
&=\frac{b^l}{k}-\frac{b^{l+m}}{kq_2}-\sum_{i=1}^l c_i b^{l-i}+j\\
&\to\frac{b^l}{k}-\frac{b}{k(b-k^2)}-\sum_{i=1}^l c_i b^{l-i}+j
\end{align*}
as $m\to\infty$, where the second equality follows from \eqref{eq:a_2-identity}. By \eqref{eq:j},
$$j>\sum_{i=1}^l c_i b^{l-i}+\frac{k}{b-k^2}-\frac{b^l-1}{k},$$
so that
\begin{align*}
\frac{b^l}{k}-\frac{b}{k(b-k^2)}-\sum_{i=1}^l c_i b^{l-i}+j
&>\frac{1}{k}-\frac{b}{k(b-k^2)}+\frac{k}{b-k^2}=0.
\end{align*}
We claim that the limit is in fact greater than $\sum_{i=1}^\infty b^{-i}=\frac{1}{b-1}$. To see this, suppose by way of contradiction that
$$0<\frac{b^l}{k}-\frac{b}{k(b-k^2)}-\sum_{i=1}^l c_i b^{l-i}+j\leq \frac{1}{b-1}.$$
Writing $A:=j-\sum_{i-1}^l c_i b^{l-i}$ and clearing of fractions, we then obtain
$$0<b^l(b-k^2)-b+k(b-k^2)A\leq\frac{k(b-k^2)}{b-1}.$$
However, the middle expression is an integer, and
$$\frac{k(b-k^2)}{b-1} \leq \frac{k^2}{k^2+k-1}<1,$$
since $k\geq 2$ and $b\leq k^2+k$. This contradiction yields the first inequality in \eqref{eq:crucial-sandwich}, for all sufficiently large $m$.

In a similar way, we can calculate
\begin{align*}
b^{l+m}\left(\frac{k}{b}+\frac{a_2+1}{b^{l+m}}-\frac{a_2}{ka_2+1}\right)
&=\frac{b^{l+m}}{kq_2}+\sum_{i-1}^l c_i b^{l-i}-\frac{b^l}{k}-j+1\\
&\geq \frac{b^{l+m}}{kq_2}-\frac{k}{b-k^2}-\frac{1}{k}
=\frac{b}{k}\left(\frac{b^{l+m-1}}{q_2}-\frac{1}{b-k^2}\right).
\end{align*}
Here taking the limit as $m\to\infty$ does not work since we would get zero. But the last expression is positive for {\em every} $m$, since by \eqref{eq:q2},
\begin{equation} \label{eq:positive-difference}
b^{l+m-1}(b-k^2)-q_2=b^l-k\sum_{i-1}^l c_i b^{l-i}+kj-1\geq kj>0,
\end{equation}
where the first inequality follows from \eqref{eq:important-bounds}, and the last inequality follows since $j\geq 1$ by Lemma \ref{lem:j}. This gives the second inequality in \eqref{eq:crucial-sandwich}.

\bigskip
Having established \eqref{eq:crucial-sandwich}, we next impose a few more technical conditions on $m$. To begin, let
$$b=P_1^{e_1}P_2^{e_2}\cdots P_r^{e_r}$$
be the prime factorization of $b$. We can write \eqref{eq:q2} as $q_2=A+B$, where
$$A=b^{l+m}-k^2 b^{l+m-1}, \qquad B=k\sum_{i=1}^l c_i b^{l-i}-b^l-kj+1.$$
Note that $-B$ is the expression from \eqref{eq:positive-difference}, so $B\neq 0$. Moreover, $b^m|A$ since $l\geq 1$. 
We now choose $m$ large enough so that 
$$\min_{1\leq i\leq r} P_i^{e_i m}>|B|.$$
This ensures that
\begin{equation} \label{eq:no-division}
P_i^{e_i m}\!\nmid q_2, \qquad i=1,\dots,r.
\end{equation}
We now claim that this implies that $x_2=p_2/q_2$ is not a $b$-adic rational, and the pre-periodic part of its expansion has at most $m$ digits. To see this, suppose by way of contradiction that $x_2=z/b^t$ for some $z\in\ZZ$ and $t\in\NN$. By \eqref{eq:crucial-sandwich}, $t>l+m$. Furthermore, $q_2$ can only have prime factors from the list $P_1,\dots,P_r$. By \eqref{eq:no-division}, this implies that $q_2|b^m$ and so $t\leq m$. This contradiction establishes the first part of the claim, which in turn implies that the base $b$ expansion of $x_2$ is eventually periodic. 

Now setting $w:=\lfloor b^m p_2/q_2\rfloor$, we can write
\begin{equation} \label{eq:x2-decomposed}
x_2=\frac{p_2}{q_2}=\frac{w}{b^m}+\frac{p'}{b^m q'},
\end{equation}
for certain integers $p'$ and $q'$ with $\gcd(p',q')=1$. Note that $w$ represents the first $m$ digits of $x_2$, and $p'/q'$ is the rational formed by the remaining digits. We argue that $\gcd(q',b)=1$, and therefore the expansion of $p'/q'$ is purely periodic. First write $q_2=u\tilde{q}_2$ with $\tilde{q}_2$ being the largest divisor of $q_2$ that is relatively prime with $b$. Then $u|b^m$ by \eqref{eq:no-division}, so we can write $b^m=uv$ with $v\in\ZZ$. Clearing fractions in \eqref{eq:x2-decomposed} and dividing by $u$ we obtain $p_2 vq'=\tilde{q}_2(wq'+p')$. It follows that, if $d:=\gcd(q',b)>1$, then $d|wq'+p'$ and hence $d|p'$, contradicting that $\gcd(p',q')=1$. Hence, $d=1$. This proves the second part of the claim.

Therefore, setting $s_2:=l+m$, the expansion of $p_2/q_2$ is periodic at least from the $s_2$-th digit on.

The last precise detail is that we choose $m$ so large that $s_2=l+m\geq 7$. Since
$$q_2>ka_2\geq b^{l+m-2}=b^{s_2-2},$$
we then get
$$q_2^2>b^{2s_2-4}\geq b^{s_2+3}.$$

By choosing $m$ so that all the above conditions are satisfied, we have provided a basis for the induction step, which we describe next.

\bigskip
Let $n\geq 2$, and suppose partial quotients $a_1,a_2,\dots,a_n$ have been constructed so that the following conditions are satisfied:
\bigskip
\begin{enumerate}[(i)]
\item The rational number $x_n:=[0;a_1,a_2,\dots,a_n]$ does not have a terminating base $b$ expansion, and the expansion of $x_n$ begins with the string $0.\hat{a}_1\hat{a}_2,\dots,\hat{a}_n$;
\item With $s_n$ being the length of the string $\hat{a}_1\hat{a}_2,\dots,\hat{a}_n$, the expansion of $x_n$ is periodic from the $s_n$th digit on;
\item With $x_n=p_n/q_n$ in lowest terms, the denominator $q_n$ satisfies
\begin{equation} \label{eq:qn-growth}
q_n^2>b^{s_n+n+1};
\end{equation}
\item $q_n>bq_{n-1}$;
\item Let $z_n:=b^{s_n}x_n-\lfloor b^{s_n}x_n \rfloor$ be the number in $(0,1)$ whose base $b$ digits are the digits of $x_n$ following the string $0.\hat{a}_1\hat{a}_2,\dots,\hat{a}_n$. Then
\begin{equation} \label{eq:zn-lower-bound}
z_n>\frac{1}{b}+\sum_{i=n}^\infty \frac{1}{b^i}.
\end{equation}
\end{enumerate}

\bigskip
We observe that by our earlier work, these conditions are satisfied for $n=2$, with \eqref{eq:zn-lower-bound} following from the precise lower bound in \eqref{eq:crucial-sandwich}.

Write $x_n=0.c_1 c_2 c_3\dots$ in base $b$. Take
$$l:=\min\{i\geq s_n: (c_{i+1},c_{i+2})\neq (b-1,b-1)\}.$$
This is well defined since $x_n$ does not have a terminating expansion, so the non-terminating expansion of $x_n$ does not end in all $(b-1)$'s. In view of (ii), we can now write
\begin{align*}
x_n&=\frac{p_n}{q_n}=0.c_1\dots c_l \overline{d_1\dots d_m}\\
&=\frac{1}{b^l}\left(\sum_{i=1}^l c_i b^{l-i}+\frac{1}{b^m-1}\sum_{i=1}^m d_i b^{m-i}\right),
\end{align*}
or, after clearing fractions,
\begin{equation} \label{eq:p-and-q-identity}
b^l(b^m-1)p_n=q_n\left[(b^m-1)\sum_{i=1}^l c_i b^{l-i}+\sum_{i=1}^m d_i b^{m-i}\right].
\end{equation}
Now choose $a_{n+1}$ so that $\hat{a}_{n+1}=c_{s_n+1}\dots c_l d_1\dots d_m$, in other words,
\begin{equation} \label{eq:a-n-plus-one}
a_{n+1}=\sum_{i=s_n+1}^l c_i b^{m+l-i}+\sum_{i=1}^m d_i b^{m-i}.
\end{equation}
Thus, $a_{n+1}$ consists of the ``next" $(l-s_n)+m$ digits of $x_n$. Using \eqref{eq:p-and-q-identity} and \eqref{eq:a-n-plus-one}, we can calculate
\begin{align*}
q_{n+1}&=a_{n+1}q_n+q_{n-1}\\
&=q_n\left(b^m\sum_{i=1}^l c_i b^{l-i}+\sum_{i=1}^m d_i b^{l-i}-b^m\sum_{i=1}^{s_n} c_i b^{l-i}\right)+q_{n-1}\\
&=b^l(b^m-1)p_n+q_n\left(\sum_{i=1}^l c_i b^{l-i}-b^m\sum_{i=1}^{s_n} c_i b^{l-i}\right)+q_{n-1}\\
&=b^{l+m}p_n-q_n b^m \sum_{i=1}^{s_n} c_i b^{l-i}+q_n\sum_{i=1}^l c_i b^{l-i}-b^l p_n+q_{n-1}.
\end{align*}
Set
$$A:=b^{l+m}p_n-q_n b^m \sum_{i=1}^{s_n} c_i b^{l-i}, \qquad B:=q_n\sum_{i=1}^l c_i b^{l-i}-b^l p_n+q_{n-1}.$$
Observe that $b^m|A$. We claim that $B\neq 0$. Note first that $d_1\neq 0$: Either $l=s_n$ and $d_1$ is the first digit of $z_n$, so $d_1>0$ by \eqref{eq:zn-lower-bound}; or else $l>s_n$ and $d_1=b-1$ by definition of $l$. Thus, we have
$$\frac{p_n}{q_n}>\sum_{i=1}^l \frac{c_i}{b^i}+\frac{1}{b^{l+1}},$$
which can be rewritten as
$$q_n\sum_{i=1}^l c_i b^{l-i}-b^l p_n<-\frac{q_n}{b}.$$
Hence by (iv), $B<0$. Now we again choose $m$ so large that $\min_{1\leq i\leq r}P_i^{e_i m}>|B|$. Then for each $i$, $P_i^{e_i m}\!\nmid\! q_{n+1}$. Setting $s_{n+1}:=l+m>m$ we conclude, as in the argument following \eqref{eq:no-division}, that the expansion of $x_{n+1}=p_{n+1}/q_{n+1}$ is periodic at least from the $s_{n+1}$-th digit on. (The analog of \eqref{eq:crucial-sandwich} that we need for this is property (v), which we verify for $n+1$ in place of $n$ below.)

Furthermore, choosing $m$ so that $a_{n+1}\geq b^2$ ($m\geq 3$ suffices), we guarantee $q_{n+1}>a_{n+1} q_n>bq_n$ giving (iv) for $n+1$ in place of $n$; and
$$
q_{n+1}^2>a_{n+1}^2 q_n^2 \geq b^2 a_{n+1}q_n^2
\geq b^2 b^{\mathrm{length}(\hat{a}_{n+1})-1}b^{s_n+n+1}=b^{s_{n+1}+(n+1)+1},
$$
yielding (iii) for $n+1$ in place of $n$. Next, (iii) also implies
\begin{equation}
|x_{n+1}-x_n|=\left|\frac{p_{n+1}}{q_{n+1}}-\frac{p_n}{q_n}\right|=\frac{1}{q_n q_{n+1}}
<\frac{1}{a_{n+1}q_n^2}\leq \frac{1}{b^{s_{n+1}+n}}.
\label{eq:consecutive-difference}
\end{equation}
Note that 
$$0.\overline{d_1\dots d_m}>\frac{1}{b}+\sum_{i=n}^\infty \frac{1}{b^i}.$$
This is clear if $d_1=b-1$. Otherwise, $l=s_n$ and $0.\overline{d_1\dots d_m}=z_n$, so the above inequality follows from (v). Since the expansion of $x_n$ begins with $0.c_1\dots c_l d_1\dots d_m d_1\dots d_m$ and $(d_1,d_2)\neq(b-1,b-1)$, it therefore follows from \eqref{eq:consecutive-difference} that the expansion of $x_{n+1}$ begins with $0.c_1\dots c_l d_1\dots d_m$ and moreover,
$$z_{n+1}>z_n-\frac{1}{b^n}>\frac{1}{b}+\sum_{i=n+1}^\infty \frac{1}{b^i}.$$
Thus, we have (i) and (v), and then also (ii), for $n+1$ in place of $n$.

\bigskip
Observe that condition (v) in the induction step guarantees that the first digit in the expansion of $x_n$ following the string $0.\hat{a}_1\dots\hat{a}_n$ is never zero. This is crucial, because if this digit is ever equal to zero, then the process cannot be continued.

We also point out that the sequence $(s_n)$ will in general grow super-exponentially fast. This is so because $q_n$ grows super-exponentially fast since the strings $\hat{a}_n$ are taken ever longer and longer, and the length of the period $d_1\dots d_m$ of $p_n/q_n$ will typically be roughly of the same order of magnitude as $q_n$.

Finally, we note that since at each stage $n$, we have a choice between infinitely many values of $m$, all leading to different choices of $a_n$, it is immediate that there are continuum many Trott numbers.

\begin{remark}
{\rm
It is easy to modify our algorithm above to ensure that it creates only transcendental Trott numbers. The Thue-Siegel-Roth theorem (see \cite{Roth}) says that, if for an irrational number $x$ there exists $\eps>0$ and infinitely many fractions $p/q$ such that
$$\left|x-\frac{p}{q}\right|<\frac{1}{q^{2+\eps}},$$
then $x$ is transcendental. Since for $x=[0;a_1,a_2,\dots]$ 
$$\left|x-\frac{p_n}{q_n}\right|<\frac{1}{q_n q_{n+1}}<\frac{1}{a_{n+1} q_n^2},$$
the Thue-Siegel-Roth condition is satisfied if $a_{n+1}>q_n^\eps$ for infinitely many $n$ and some $\eps>0$. We can always choose $a_{n+1}$ this large, and often the algorithm will force us to, because the string $\hat{a}_{n+1}$ contains at least one full period of the base $b$ expansion of $p_n/q_n$.

An interesting but probably very difficult question is, whether there exist algebraic Trott numbers.
}
\end{remark}

\section{Borel classification of $T_b$} \label{sec:Borel}

In this section we prove Theorem \ref{thm:G-delta}. We will use the following notation. For a finite sequence $(a_1,\dots,a_n)\in\NN^n$, we let $I(a_1,\dots,a_n)$ denote the open interval whose endpoints are $[0;a_1,\dots,a_n]$ and $[0;a_1,\dots,a_{n-1},a_n+1]$. Furthermore, we let $J(a_1,\dots,a_n)$ denote the open interval of numbers in $(0,1)$ whose base $b$ expansion begins with the string $0.\hat{a}_1\dots\hat{a}_n$. Precisely,
$$J(a_1,\dots,a_n):=\left(\sum_{i=1}^n a_i b^{-s_i},\sum_{i=1}^n a_i b^{-s_i}+b^{-s_n}\right),$$
where $s_i$ denotes the length of the string $\hat{a}_1\dots\hat{a}_i$ for $i=1,2,\dots,n$.

\begin{proof}[Proof of Theorem \ref{thm:G-delta}]
Fix $b\in\Gamma$. 
We first show that $T_b$ is a $G_\delta$ set. With any finite sequence $(a_1,\dots,a_n)\in\bigcup_{n=1}^\infty \NN^n$ we associate a set
$$E(a_1,\dots,a_n):=I(a_1,\dots,a_n)\cap J(a_1,\dots,a_n).$$
(For most sequences, this set will be empty.) Since $I(a_1,\dots,a_n,a_{n+1})\subset I(a_1,\dots,a_n)$ and $J(a_1,\dots,a_n,a_{n+1})\subset J(a_1,\dots,a_n)$ it follows that
\begin{equation} \label{eq:nested-Es}
E(a_1,\dots,a_n,a_{n+1})\subset E(a_1,\dots,a_n) \qquad\forall\ (a_1,\dots,a_{n+1})\in\NN^{n+1}.
\end{equation}
Now set
$$G_n:=\bigcup_{(a_1,\dots,a_n)\in\NN^n} E(a_1,\dots,a_n), \qquad n\in\NN$$
and
$$G:=\bigcap_{n=1}^\infty G_n.$$
It is clear that each $G_n$ is open, hence $G$ is a $G_\delta$ set. We claim that $G=T_b$.

First, let $x\in G$. Then for each $n\in\NN$ there is a sequence $(a_1,\dots,a_n)\in\NN^n$ such that $x\in E(a_1,\dots,a_n)$. By \eqref{eq:nested-Es} and the fact that for each $n$, the collection $\{E(a_1,\dots,a_n)\}$ is pairwise disjoint, this means there is an infinite sequence $(a_1,a_2,\dots)$ such that $x\in I(a_1,\dots,a_n)\cap J(a_1,\dots,a_n)$ for each $n$. Hence $x\in T_b$.

Conversely, take $x\in T_b$. Then $x$ is irrational, so it has a unique continuous fraction expansion $(a_1,a_2,\dots)$. This means $x\in I(a_1,\dots,a_n)$ for each $n$. Since $x$ is a Trott number, $x$ must also lie in $J(a_1,\dots,a_n)$ for each $n$. Therefore, $x\in G$.

Next, we show that $T_b$ is not an $F_\sigma$ set. We do this via continuous reduction, by constructing a continuous map $f:2^\omega\to T_b$ such that $f^{-1}(T_b)=H$, where $2^\omega$ denotes the Cantor space of all infinite sequences of $0$'s and $1$'s and $H$ is the set of sequences in $2^\omega$ which contain infinitely many $1$'s. It is well known that $H$ is a complete $G_\delta$ set (see, e.g., \cite[Exercise 23.1]{Kechris}), so the construction of such a map $f$, together with the first half of this proof, will imply that $T_b$ is a complete $G_\delta$ set as well.

Fix an initial segment $(a_1,\dots,a_{n_0})$ as follows. If $b\in\{2,3\}$, we choose $(a_1,\dots,a_{n_0})$ so that
it satisfies the hypotheses of Lemma \ref{lem:once-started} with $n=n_0$ for some $l$. If $b\in\Gamma\backslash\{2,3\}$, we take $n_0=2$ and choose $(a_1,a_2)$ as outlined in the previous section. In both cases, the proofs from the previous section show that $(a_1,\dots,a_{n_0})$ can be extended to an infinite sequence $(a_1,a_2,\dots)$ such that for each $n\geq n_0$ the expansion of $[0;a_1,\dots,a_n]$ is periodic to the right of the string $0.\hat{a}_1\dots\hat{a}_n$ and the number $x=[0;a_1,a_2,\dots]$ is Trott. Furthermore, the string $\hat{a}_{n+1}$ may encompass as many periods of this expansion as we wish.

Now let a sequence $\xi=(\xi_1,\xi_2,\dots)\in 2^\omega$ be given. We begin with $n=n_0$ and $k=0$. Let $(d_1,\dots,d_m)$ be such that $x_n=[0;a_1,\dots,a_n]=0.\hat{a}_1\dots\hat{a}_n\overline{d_1\dots d_m}$. For $i=1,2,\dots$, we do the following. If $\xi_i=0$, we increment $k$ by 1. If $\xi_i=1$, we choose $a_{n+1}$ as outlined in the previous section so that the string $\hat{a}_{n+1}$ encompasses at least $k$ periods of the expansion of $x_n$ (that is, $\hat{a}_{n+1}$ begins with $(d_1\dots d_m)^k$); we increment $n$ by 1 and reset $k$ to $0$. We repeat these steps for each $i\in\NN$ successively. 

If $\xi\not\in H$, then this procedure only generates some finite sequence $(a_1,\dots,a_N)$, and we set $f(\xi)=[0;a_1,\dots,a_N]$. In this case, $f(\xi)$ is rational and so $f(\xi)\not\in T_b$. If on the other hand $\xi\in H$, then the procedure generates an infinite sequence $(a_1,a_2,\dots)$ such that the number $x=[0;a_1,a_2,\dots]$ is Trott, and we set $f(\xi)=x$. 

It follows that $f^{-1}(T_b)=H$, and it is not difficult to see that $f$ is continuous. Thus, $f$ is the desired map, and $T_b$ is a complete $G_\delta$ set.
\end{proof}

\section{The upper bound for Hausdorff dimension} \label{sec:upper-bound}

In this section we prove Theorem \ref{thm:upper-bound}. The only information about Trott numbers that we use here is the following elementary lemma, which follows immediately from the fact that for even $n$, the number $[0;a_1,\dots,a_n,c]$ is decreasing in $c$.

\begin{lemma} \label{lem:one-per-magnitude}
For each even $n$, each sequence $(a_1,\dots,a_n)$ and each $k\in\NN\cup\{0\}$, there is at most one number $c:=c(a_1,\dots,a_n)\in \{b^k,b^k+1,\dots,b^{k+1}-1\}$ such that the interval $I(a_1,\dots,a_n,c)$ intersects $T_b$, where $I(a_1,\dots,a_n,c)$ is defined as in Section \ref{sec:Borel}.
\end{lemma}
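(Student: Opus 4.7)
The plan is to argue by contradiction. Suppose that for some even $n$, some sequence $(a_1,\dots,a_n)$ and some $k\in\NN\cup\{0\}$, there exist two distinct values $c_1<c_2$ in $\{b^k,\dots,b^{k+1}-1\}$ with $I(a_1,\dots,a_n,c_i)\cap T_b\neq\emptyset$ for $i=1,2$; pick Trott numbers $x_i\in I(a_1,\dots,a_n,c_i)\cap T_b$. The strategy is to locate $x_1$ and $x_2$ in opposite orders using the continued fraction and base-$b$ partitions, and read off a direct contradiction.

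First, on the continued fraction side, the hint tells us that $c\mapsto[0;a_1,\dots,a_n,c]$ is strictly decreasing when $n$ is even, so
$$I(a_1,\dots,a_n,c)=\bigl([0;a_1,\dots,a_n,c+1],\,[0;a_1,\dots,a_n,c]\bigr).$$
From $c_1+1\leq c_2$ and monotonicity we obtain $[0;a_1,\dots,a_n,c_2]\leq [0;a_1,\dots,a_n,c_1+1]$, which places $I(a_1,\dots,a_n,c_2)$ weakly to the left of $I(a_1,\dots,a_n,c_1)$. Since both intervals are open, this forces $x_2<x_1$ strictly.

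Second, on the base-$b$ side: each $\hat{c}_i$ has length exactly $k+1$, so by the definition of a Trott number the expansion of $x_i$ in base $b$ begins with $0.\hat{a}_1\cdots\hat{a}_n\hat{c}_i$, hence $x_i\in J(a_1,\dots,a_n,c_i)$. Because $s_{n+1}=s_n+k+1$ is constant as $c$ ranges over $\{b^k,\dots,b^{k+1}-1\}$, the intervals $J(a_1,\dots,a_n,c)$ form pairwise disjoint blocks of common length $b^{-s_n-k-1}$, arranged strictly increasingly in $c$. Thus $J(a_1,\dots,a_n,c_1)$ lies weakly to the left of $J(a_1,\dots,a_n,c_2)$, and openness gives $x_1<x_2$. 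The two strict inequalities $x_2<x_1$ and $x_1<x_2$ contradict each other, proving the claim. I do not expect any serious obstacle: the lemma reduces to the observation that the continued fraction partition $\{I(\dots,c)\}$ is ordered oppositely to the base-$b$ partition $\{J(\dots,c)\}$ on the index range $\{b^k,\dots,b^{k+1}-1\}$, and the only care required in the write-up is to note that the strict inequalities come from the openness of $I$ and $J$.
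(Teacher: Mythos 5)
Your proof is correct and follows essentially the paper's own one-line argument (the paper simply asserts the lemma "follows immediately from the fact that for even $n$, the number $[0;a_1,\dots,a_n,c]$ is decreasing in $c$"); you have correctly filled in the details, noting that the intervals $I(a_1,\dots,a_n,c)$ march leftward in $c$ while the base-$b$ intervals $J(a_1,\dots,a_n,c)$ march rightward as $c$ ranges over a fixed digit-length block, and that a Trott number in $I(a_1,\dots,a_n,c)$ must lie in $J(a_1,\dots,a_n,c)$ because its continued fraction expansion begins $a_1,\dots,a_n,c$.
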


\begin{proof}[Proof of Theorem \ref{thm:upper-bound}]
First, we prove that the union $T=\bigcup_{b\geq 2} T_b$ is nowhere dense. It is immediate from Lemma \ref{lem:one-per-magnitude} that $T_b$ is nowhere dense for each $b$. Let $(\alpha,\beta)\subset (0,1)$ be an arbitrary interval; without loss of generality we may assume that $\alpha>0$. Then we can choose $B\in\NN$ so that $1/(\sqrt{B}-1)\leq \alpha$. If $x=[0;a_1,a_2,\dots]$ is Trott in some base $b\geq B$, then $a_1=\lfloor \sqrt{b} \rfloor \geq \sqrt{b}-1\geq \sqrt{B}-1$, and so $x<1/a_1\leq 1/(\sqrt{B}-1)\leq\alpha$. Hence, $(\alpha,\beta)\cap \bigcup_{b\geq B}T_b=\emptyset$. Since furthermore $\bigcup_{2\leq b<B} T_b$ is nowhere dense, there is a subinterval of $(\alpha,\beta)$ that does not intersect 
$T$. This gives the desired result.

Next, we show that $\dim_H T<1$.
Let $f_n(x)=\frac{1}{x+n}$, for $n\in\NN$ and $x\in[0,1]$. For $n,m\in\NN$, write $S_{n,m}:=f_n\circ f_m$. Note that
$$S_{n,m}(x)=\frac{x+m}{n(x+m)+1}=\frac{1}{n}\left(1-\frac{1}{n(x+m)+1}\right),$$
and so
$$S_{n,m}'(x)=\big(n(x+m)+1\big)^{-2}.$$

By Lemma \ref{lem:one-per-magnitude}, we have for each $k\in\NN$ the inclusion
$$T_b\subset \bigcup_{(l_1,m_1),\dots,(l_k,m_k)} S_{n_1,m_1}\circ\dots\circ S_{n_k,m_k}([0,1]),$$
where $l_i\in\NN\cup\{0\}$, $m_i\in\NN$ and
$$n_i:=n_i(l_1,m_1,\dots,l_{i-1},m_{i-1},l_i)\in \{b^{l_i},b^{l_i}+1,\dots,b^{l_i+1}-1\}.$$
It follows that
\begin{align*}
\diam\, S_{n_1,m_1}\circ\dots\circ S_{n_k,m_k}([0,1])&\leq \prod_{i=1}^k \max_{x\in[0,1]}|S_{n_i,m_i}'(x)|\\
&=\prod_{i=1}^k (n_i m_i+1)^{-2}
\leq \prod_{i=1}^k \big(b^{l_i}m_i+1\big)^{-2}.
\end{align*}
Given $\delta>0$, choose $k$ so large that $2^{-2k}<\delta$. Then we get from the above, for any $s>0$,
\begin{align*}
\mathcal{H}_\delta^s(T_b)&\leq \sum_{(l_1,m_1),\dots,(l_k,m_k)} \left(\diam S_{n_1,m_1}\circ\dots\circ S_{n_k,m_k}([0,1])\right)^s\\
&\leq \sum_{(l_1,m_1),\dots,(l_k,m_k)}\prod_{i=1}^k\big(b^{l_i}m_i+1\big)^{-2s}\\
&=\left(\sum_{l=0}^\infty\sum_{m=1}^\infty (b^l m+1)^{-2s}\right)^k.
\end{align*}
Hence $\dim_H T_b\leq s_0$, where $s_0:=s_0(b)$ is the solution of
\begin{equation}
\sum_{l=0}^\infty\sum_{m=1}^\infty (b^l m+1)^{-2s}=1.
\label{eq:double-sum}
\end{equation}
This double series cannot be evaluated in closed form, so we estimate it: For $s>1/2$,
\begin{align*}
\sum_{l=0}^\infty\sum_{m=1}^\infty (b^l m+1)^{-2s}&\leq\sum_{m=1}^\infty (m+1)^{-2s}+\sum_{l=1}^\infty\sum_{m=1}^\infty (b^l m)^{-2s}\\
&=\zeta(2s)-1+\frac{b^{-2s}}{1-b^{-2s}}\zeta(2s)\\
&=\frac{b^{2s}}{b^{2s}-1}\zeta(2s)-1.
\end{align*}
Since the left hand side of \eqref{eq:double-sum} is decreasing in $s$, it follows that $s_0$ is no greater than the root of the equation
\begin{equation}
\frac{b^{2s}}{b^{2s}-1}\zeta(2s)=2.
\label{eq:simpler-equation}
\end{equation}
For $b=10$, for instance, this gives $s_0\leq .8745$, and for $b=3$ it gives $s_0\leq .9493$. In the limit as $b\to\infty$, the root of \eqref{eq:simpler-equation} tends to the root of the simple equation $\zeta(2s)=2$, which is about $.8643$. Only for $b=2$ does \eqref{eq:simpler-equation} not have a root below 1, and the estimate is more delicate. Again, the $l=0$ term of
$$\sum_{l=0}^\infty\sum_{m=1}^\infty (2^l m+1)^{-2s}$$
is equal to $\zeta(2s)-1$. The $l=1$ term is
$$\sum_{m=1}^\infty (2m+1)^{-2s}=(1-2^{-2s})\zeta(2s)-1.$$
The $l=2$ term we estimate by an integral, as follows:
\begin{align*}
\sum_{m=1}^\infty (4m+1)^{-2s}&\leq 5^{-2s}+9^{-2s}+\int_2^\infty (4x+1)^{-2s}dx\\
&=5^{-2s}+9^{-2s}+\frac{9^{1-2s}}{4(2s-1)}.
\end{align*}
Finally, for $l\geq 3$ we estimate again by
$$\sum_{m=1}^\infty (2^l m+1)^{-2s}\leq \sum_{m=1}^\infty (2^l m)^{-2s}=2^{-2ls}\zeta(2s).$$
Thus, all taken together, we arrive at
\begin{align*}
\sum_{l=0}^\infty\sum_{m=1}^\infty (2^l m+1)^{-2s}&\leq (2-2^{-2s})\zeta(2s)-2+\sum_{l=3}^\infty 2^{-2ls}\zeta(2s)\\
&\qquad\qquad+5^{-2s}+9^{-2s}+\frac{9^{1-2s}}{4(2s-1)}\\
&=\left(2-2^{-2s}+\frac{2^{-6s}}{1-2^{-2s}}\right)\zeta(2s)-2+5^{-2s}+9^{-2s}+\frac{9^{1-2s}}{4(2s-1)}.
\end{align*}
Setting this last expression equal to 1 and solving numerically, we obtain $s_0(2)\leq .9979$.

The above argument shows that $\dim_H T_b<1$ for every $b$. Observe also that the upper bounds obtained tend to $.8643$ as $b\to\infty$. It then follows from the countable stability of Hausdorff dimension that $\dim_H T\leq\sup_b \dim_H T_b<1$.
\end{proof}

\begin{remark}
{\rm
Of course, better estimates for $\dim_H T_b$ can be obtained by using more sophisticated methods, or even just by using a version of the above argument that considers four-fold compositions of the maps $f_n$. However, any upper bound we get by using nothing else about Trott numbers than Lemma \ref{lem:one-per-magnitude} is likely to be very far from the actual dimension. We will leave further dimension questions regarding $T_b$ for future work.
}
\end{remark}

\section{Multiple Trott numbers} \label{sec:multiple-Trott}

The goal of this section is to prove Theorem \ref{thm:multiple-Trott}. The crucial element in the proofs below is the following formula for $a_2$.

\begin{lemma} \label{lem:unique-choice}
Let $k\geq 2$ and $k^2+1\leq b\leq k^2+k$. If $b=5$ assume $l\geq 3$; else assume $l\geq 2$.
Suppose $x=[0;a_1,a_2,\dots]\in T_b$ and $a_2$ has $l$ digits in base $b$. Then
\begin{equation} \label{eq:unique-a2}
a_2=\left\lceil \frac{b^l(b-k^2)}{k}-\frac{b}{k(b-k^2)}\right\rceil-1.
\end{equation}
\end{lemma}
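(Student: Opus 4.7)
The plan is to reduce the Trott condition to a pair of strict inequalities on $a_2$, and show these pin down a unique integer via the zero structure of a certain quadratic. Since $a_1=k=\lfloor\sqrt b\rfloor$ is forced and $a_2=N$ has exactly $l$ base-$b$ digits, $x$ must lie in both the continued-fraction interval $\bigl(N/(kN+1),\,(N+1)/(k(N+1)+1)\bigr)$ and the base-$b$ cylinder $\bigl[k/b+N/b^{l+1},\,k/b+(N+1)/b^{l+1}\bigr)$. The requirement that these intersect is equivalent to
\begin{align*}
(\mathrm{A})\quad &\frac{N}{kN+1}<\frac{k}{b}+\frac{N+1}{b^{l+1}},\\
(\mathrm{B})\quad &\frac{k}{b}+\frac{N}{b^{l+1}}<\frac{N+1}{k(N+1)+1}.
\end{align*}

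Setting $D:=b-k^2\in\{1,\ldots,k\}$ and clearing denominators, (A) and (B) become $0<P(N)<Db^l+1$, where $P(N):=kN^2-\bigl[Db^l-(k+1)\bigr]N+\bigl(kb^l+1\bigr)$. Evaluating $P$ at $M:=Db^l/k-b/(kD)$, the identity $(k+1)D+k^2-b=kD$ leads after simplification to
\[
P(M)=Db^l+1+\frac{b(k^2+k-b)}{D^2}\ge Db^l+1,\qquad P(M-1)=\frac{b(b+k-k^2)}{D^2}>0,
\]
since $k^2+k-b\ge 0$ and $b>k^2$. A short calculation shows the vertex of $P$ lies strictly below $M-2$ for $l\ge 2$, so $P$ is strictly increasing on $[M-2,\infty)$; then $P(M)\ge Db^l+1$ together with monotonicity rules out any integer $N\ge\lceil M\rceil$ under (B), giving the upper bound $N\le N_0:=\lceil M\rceil-1$.

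For the matching lower bound $N\ge N_0$, I would show $P(N_0-1)\le 0$, which makes (A) fail for every integer $N\le N_0-1$. Writing $\epsilon:=M-N_0\in(0,1]$ (so $\epsilon=1$ iff $M\in\ZZ$), expansion gives
\[
P(N_0-1)=-\epsilon Db^l+\frac{b(k^2+k-b)}{D^2}+\frac{2b}{D}+\epsilon\!\left(\frac{2b}{D}+k-1\right)+k\epsilon^2.
\]
The key arithmetic input is $\epsilon\ge 1/(kD)$, which follows from $kD\cdot M=D^2b^l-b\in\ZZ$ and $kD\epsilon\in\{1,\ldots,kD\}$. I would then split into subcases. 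If $D=1$ (i.e., $b=k^2+1$), then $b\equiv 1\pmod k$ forces $M\in\ZZ$, so $\epsilon=1$ and the claim reduces to the explicit inequality $b^l\ge b(k+3)+2k-1$; this holds precisely when $l\ge 2$ and $k\ge 3$, or when $l\ge 3$---the sole failure being $b=5,\,l=2$, which is exactly the case excluded by the hypothesis. If $D\ge 2$, substituting $\epsilon\ge 1/(kD)$ and $\epsilon\le 1$ reduces $P(N_0-1)\le 0$ to $D^2b^l\ge bck+4bkD+(2k-1)kD^2$, which with the substitutions $b=k^2+D$ and $c=k-D$ becomes a polynomial inequality in $k$ and $D$ that is verified directly for all $D\ge 2$, $k\ge 2$, and $l\ge 2$.

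The main obstacle is controlling $P(N_0-1)$ in the last step: the uniform bound $\epsilon\ge 1/(kD)$ is relatively weak, so Case $D\ge 2$ requires careful expansion to show the leading term $D^2b^l$ already dominates at $l=2$, while Case $D=1$ cannot exploit the uniform $\epsilon$-bound and must rely on the sharper fact $\epsilon=1$ together with a direct computation, which is precisely where the sharp exclusion $b=5,\,l=2$ emerges.
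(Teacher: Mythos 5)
Your reduction to the pair of inequalities (A) $P(N)>0$ and (B) $P(N)<Db^l+1$ (with $D=b-k^2$ and $P(N)=kN^2-[Db^l-(k+1)]N+(kb^l+1)$) matches the paper's setup after the substitution $z=ka_2$, and your computations of $P(M)$, $P(M-1)$, the vertex location, and the integrality bound $\epsilon\ge 1/(kD)$ all check out. The organization via $M$, $N_0=\lceil M\rceil-1$, and $\epsilon=M-N_0$ is a clean repackaging of what the paper does by substituting $z=k(\alpha\pm 1)$ directly.

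However, there is a genuine gap in the lower-bound half. You claim that $P(N_0-1)\le 0$ ``makes (A) fail for every integer $N\le N_0-1$.'' This does not follow: $P$ is a convex quadratic whose vertex $x^*=\frac{Db^l-(k+1)}{2k}$ lies strictly between $b^{l-1}$ and $N_0-1$ (since $Db/(2k)>1$ for every $k\ge 2$, $D\ge 1$), so $P$ is decreasing on $[b^{l-1},x^*]$ and knowing $P(N_0-1)\le 0$ tells you nothing about $P$ at the left endpoint $N=b^{l-1}$ of the admissible range. A priori the left root of $P$ could fall inside $[b^{l-1},N_0-1]$, in which case some smaller $N$ with $l$ digits would still satisfy (A). To close the argument you must also verify $P(b^{l-1})\le 0$ and then invoke convexity to kill the whole interval $[b^{l-1},N_0-1]$; this is exactly what the paper does when it substitutes $z=kb^{l-1}$ and shows the quadratic is $\le -kb^l(k^2-2k)\le 0$. (Equivalently, one can show the smaller root $r_1=2x^*-r_2$ satisfies $r_1<(k-D)/D+2<b^{l-1}$, using $r_2>N_0-1$; either way the check is missing from your write-up.) Once that endpoint check is added, your proof is essentially the paper's argument in slightly different clothing.
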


\begin{proof}
Let $\alpha$ denote the right hand side of \eqref{eq:unique-a2}. Recall that $a_1=k$ is forced. Hence, in order for $x=[0;a_1,a_2,\dots]$ to lie in $T_b$, the intervals
\begin{equation} \label{eq:overlapping-intervals}
\left[\frac{k}{b}+\frac{a_2}{b^{l+1}},\frac{k}{b}+\frac{a_2+1}{b^{l+1}}\right] \qquad \mbox{and} \qquad
\left[\frac{a_2}{ka_2+1},\frac{a_2+1}{k(a_2+1)+1}\right]
\end{equation}
must overlap. We first show that, if $a_2\geq\alpha+1$, then
\begin{equation} \label{eq:to-the-right}
\frac{k}{b}+\frac{a_2}{b^{l+1}}\geq \frac{a_2+1}{k(a_2+1)+1},
\end{equation}
prohibiting the overlap. Set $z:=ka_2$. Clearing fractions in \eqref{eq:to-the-right}, multiplying by $k$ and rearranging, we see that \eqref{eq:to-the-right} is equivalent to the quadratic inequality
\begin{equation} \label{eq:quadratic-1}
z^2-\{b^l(b-k^2)-(k+1)\}z+b^l k(k^2+k-b)\geq 0.
\end{equation}
Now we observe from \eqref{eq:unique-a2} that
\begin{equation} \label{eq:alpha-sandwich}
k\alpha<b^l(b-k^2)-\frac{b}{b-k^2}\leq k(\alpha+1),
\end{equation}
so $k\alpha$ lies to the right of the vertex of the parabola on the left of \eqref{eq:quadratic-1}, as can be checked readily by using that $b\geq k^2+1$ and $l\geq 2$. Therefore, we obtain, using the second inequality in \eqref{eq:alpha-sandwich},
\begin{align*}
z^2-&\{b^l(b-k^2)-(k+1)\}z+b^l k(k^2+k-b) \\
&\geq k^2(\alpha+1)^2-\{b^l(b-k^2)-(k+1)\}k(\alpha+1)+b^l k(k^2+k-b) \\
&\geq \left\{b^l(b-k^2)-\frac{b}{b-k^2}\right\}^2-\{b^l(b-k^2)-(k+1)\}\left\{b^l(b-k^2)-\frac{b}{b-k^2}\right\}\\
& \qquad\qquad\qquad\qquad\qquad +b^l k(k^2+k-b)\\
&=-b^{l+1}+\left(\frac{b}{b-k^2}\right)^2+b^l(b-k^2)(k+1)-\frac{b(k+1)}{b-k^2}+b^l k(k^2+k-b)\\
&=\left(\frac{b}{b-k^2}\right)^2-\frac{b(k+1)}{b-k^2}=\frac{b}{b-k^2}\left(\frac{b}{b-k^2}-(k+1)\right)\\
&=\frac{b}{b-k^2}\left(\frac{k^2}{b-k^2}-k\right)=\frac{bk}{b-k^2}\left(\frac{k}{b-k^2}-1\right)\\
&\geq 0,
\end{align*}
where the last inequality follows since $b\leq k^2+k$. This proves \eqref{eq:quadratic-1}, and hence \eqref{eq:to-the-right}.

Next, we show that if $a_2\leq \alpha-1$, then
\begin{equation} \label{eq:to-the-left}
\frac{k}{b}+\frac{a_2+1}{b^{l+1}}\leq \frac{a_2}{ka_2+1}. 
\end{equation}
Again setting $z:=k a_2$ and clearing fractions, this is equivalent to
\begin{equation} \label{eq:quadratic-2}
z^2-\{b^l(b-k^2)-(k+1)\}z+k(kb^l+1)\leq 0.
\end{equation}
Since $b^{l-1}\leq a_2\leq\alpha-1$ and the expression on the left side of \eqref{eq:quadratic-2} is convex in $z$, it is enough to verify \eqref{eq:quadratic-2} for $z=kb^{l-1}$ and $z=k(\alpha-1)$. Substituting $z=kb^{l-1}$ we obtain
\begin{align*}
k^2 b^{2l-2}-&\{b^l(b-k^2)-(k+1)\}kb^{l-1}+k^2 b^l+k\\
&=k\big[b^{2l-2}\{k-b(b-k^2)\}+(k+1)b^{l-1}+kb^l+1\big]\\
&\leq k\big[-(k^2-k+1)b^{2l-2}+(k+1)b^l\big]\\
&=-kb^l\big[(k^2-k+1)b^{l-2}-(k+1)\big]\\
&\leq -kb^l(k^2-2k)\leq 0.
\end{align*}
Here the first inequality follows since $b\geq k^2+1$ and $(k+1)b^{l-1}<b^l$; the second inequality since $l\geq 2$; and the last inequality since $k\geq 2$.

Next, set $z=k(\alpha-1)$. We consider separately the cases $b=k^2+1$ and $b\geq k^2+2$. In the second case, we slightly strengthen the first inequality in \eqref{eq:alpha-sandwich}. Multiplying by $b-k^2$ and using that $\alpha$ is an integer, we obtain
\[
k\alpha\leq b^l(b-k^2)-\frac{b+1}{b-k^2}.
\]
As a result, 
\begin{align*}
z^2-&\{b^l(b-k^2)-(k+1)\}z+k(kb^l+1)\\
&\leq \left\{b^l(b-k^2)-\frac{b+1}{b-k^2}-k\right\}^2\\
& \qquad\qquad -\{b^l(b-k^2)-(k+1)\}\left\{b^l(b-k^2)-\frac{b+1}{b-k^2}-k\right\}+k(kb^l+1)\\
&=-b^l(b+1)-b^l k(b-k^2)+\left(\frac{b+1}{b-k^2}+k\right)^2+b^l(b-k^2)(k+1)\\
& \qquad\qquad -(k+1)\left(\frac{b+1}{b-k^2}+k\right)+k(kb^l+1)\\
&=-b^l+\left(\frac{b+1}{b-k^2}+k\right)^2-(k+1)\left(\frac{b+1}{b-k^2}+k\right)+k\\
&=-b^l+\left(\frac{k^2+1}{b-k^2}+k+1\right)\cdot \frac{k^2+1}{b-k^2}+k\\
&\leq -(k^2+2)^2+\left(\frac{k^2+1}{2}+k+1\right)\cdot\frac{k^2+1}{2}+k\\
&\leq 0,
\end{align*}
where we used that $l\geq 2$ and $b\geq k^2+2$, and the last inequality is easily verified using basic algebra.

When $b=k^2+1$, we need to be slightly more precise still. In this case, we have exactly $k\alpha=b^l-(k^2+k+1)$ from \eqref{eq:unique-a2}, so $z=k\alpha-k=b^l-(k+1)^2$ and
\begin{align*}
z^2-&\{b^l(b-k^2)-(k+1)\}z+k(kb^l+1)\\
&=\{b^l-(k+1)^2\}^2-\{b^l-(k+1)\}\{b^l-(k+1)^2\}+k(kb^l+1)\\
&=-b^l(k+1)^2+(k+1)^4+b^l(k+1)-(k+1)^3+k^2 b^l+k\\
&=-kb^l+(k+1)^4-(k+1)^3+k\\
&\leq -k(k^2+1)^2+(k+1)^4-(k+1)^3+k\\
&=k\{-(k^2+1)^2+(k+1)^3+1\}=-k(k^4-k^3-k^2-3k-1)\\
&\leq 0,
\end{align*}
where the last inequality holds for all $k\geq 3$. It remains to deal with the case $k=2$, i.e. $b=5$. Here $l\geq 3$ by the assumption of the lemma, and the calculation simplifies: $z=5^l-9$, and so
\begin{align*}
z^2-\{b^l(b-k^2)&-(k+1)\}z+k(kb^l+1)\\
&=(5^l-9)^2-(5^l-3)(5^l-9)+4\cdot 5^l+2\\
&=-2\cdot 5^l+56\leq 0
\end{align*}
for all $l\geq 3$.
\end{proof}

\begin{remark}
{\rm
\begin{enumerate}[(a)]
\item The conclusion of the lemma fails for $l=1$. For instance, when $b=6$, it appears that $a_2$ can be either 1, 2, 3 or 4, although we do not know whether for each of these choices of $a_2$ the sequence of partial quotients can be continued indefinitely. In addition, if $b=5$ and $l=2$, then \eqref{eq:unique-a2} prescribes $a_2=9$, but $a_2=8$ appears to be possible as well, since the intervals in \eqref{eq:overlapping-intervals} overlap for both values of $a_2$.
\item The expression for $a_2$ dictated by \eqref{eq:unique-a2} is consistent with our choice of $a_2$ in \eqref{eq:a2}, as may be seen by replacing $l$ with $l+m-1$ in \eqref{eq:unique-a2} and doing some simple algebra.
\end{enumerate}
}
\end{remark}

In a few special cases, the expression in \eqref{eq:unique-a2} simplifies.

\begin{corollary} \label{cor:a2-special-cases}
Let $a_2$ have $l$ digits in base $b$, where $l\geq 2$.
\begin{enumerate}[(i)]
\item If $b=k^2+1$ with $k\geq 3$, then
\begin{equation} \label{eq:base10-etc}
a_2=\frac{(k^2+1)^l-(k^2+1)}{k}-1.
\end{equation}
This expression is also valid when $b=5$ and $l\geq 3$.
\item If $b=k^2+k$ with $k\geq 2$, then
\begin{equation} \label{eq:the-upper-extreme}
a_2=b^l-2=(k^2+k)^l-2.
\end{equation}
\end{enumerate}
\end{corollary}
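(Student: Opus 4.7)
The plan is to reduce each case to the explicit formula for $a_2$ established in Lemma \ref{lem:unique-choice}, namely
\[
a_2=\left\lceil \frac{b^l(b-k^2)}{k}-\frac{b}{k(b-k^2)}\right\rceil-1,
\]
and then evaluate the ceiling by exploiting the special shape of $b-k^2$ in each case. The main (and essentially only) technical issue will be to show that the argument of $\lceil\cdot\rceil$ has a tractable fractional part; once this is done the result is immediate. The lemma requires $l\geq 2$ in general and $l\geq 3$ when $b=5$, and these hypotheses carry over to the corollary.

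For part (i), with $b=k^2+1$ we have $b-k^2=1$, so the formula collapses to
\[
a_2=\left\lceil \frac{b^l-b}{k}\right\rceil-1.
\]
The key observation is that $b=k^2+1\equiv 1\pmod k$, hence $b^l-b\equiv 0\pmod k$, so the quantity inside the ceiling is already an integer. Thus the ceiling is superfluous, and substituting $b=k^2+1$ yields \eqref{eq:base10-etc}. The restriction $k\geq 3$, or $k=2$ with $l\geq 3$, is exactly what Lemma \ref{lem:unique-choice} requires (in the $k=2$, $l=2$ case noted in the remark the formula fails because a second valid choice of $a_2$ exists).

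For part (ii), with $b=k^2+k$ we have $b-k^2=k$, so the formula becomes
\[
a_2=\left\lceil b^l-\frac{b}{k^2}\right\rceil-1=\left\lceil b^l-\frac{k+1}{k}\right\rceil-1=\left\lceil b^l-1-\frac{1}{k}\right\rceil-1.
\]
Since $k\geq 2$ we have $1/k\in(0,1)$, so $\lceil b^l-1-1/k\rceil=b^l-1$, and therefore $a_2=b^l-2$, giving \eqref{eq:the-upper-extreme}. The whole proof is thus a short verification, with no step presenting a genuine obstacle beyond the congruence $b^l\equiv b\pmod k$ in case (i) and the interval containment $1/k\in(0,1)$ in case (ii).
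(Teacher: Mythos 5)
Your proof is correct and takes exactly the route the paper intends (the paper states the result as a corollary of Lemma~\ref{lem:unique-choice} without spelling out the computation). In each case you substitute $b-k^2$ into \eqref{eq:unique-a2}, and the only content is evaluating the ceiling: for $b=k^2+1$ the congruence $b\equiv 1\pmod k$ makes the argument an integer, and for $b=k^2+k$ the argument has fractional part $1-\tfrac{1}{k}\in(0,1)$, so the ceiling rounds up to $b^l-1$; both computations are carried out accurately, and the hypotheses $k\geq 3$ (or $b=5$ with $l\geq 3$) are tracked correctly as inherited from the lemma.
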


Bases 2 and 3 require special consideration. 

\begin{lemma} \label{lem:a2-for-bases-2-and-3}
\begin{enumerate}[(i)]
\item If $b=2$, then either $a_2=4$ or $a_2=2^l-3$ for some $l\geq 3$.
\item If $b=3$, then necessarily $a_2=1$.
\end{enumerate}
\end{lemma}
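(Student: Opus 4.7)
The plan is to reuse the overlap principle from the proof of Lemma \ref{lem:unique-choice}. Since $\lfloor\sqrt{b}\rfloor=1$ for both $b=2$ and $b=3$, the first partial quotient is necessarily $a_1=1$. If $\hat{a}_2$ has length $l$ in base $b$ (so $b^{l-1}\leq a_2<b^l$), then for $x=[0;1,a_2,\ldots]\in T_b$ the two intervals
\[
I_{\mathrm{CF}} := \left[\frac{a_2}{a_2+1},\frac{a_2+1}{a_2+2}\right], \qquad I_{\mathrm{B}} := \left[\frac{1}{b}+\frac{a_2}{b^{l+1}},\frac{1}{b}+\frac{a_2+1}{b^{l+1}}\right]
\]
must meet in their interiors (since Trott numbers are irrational). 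The entire argument reduces to understanding which integers $a_2$ satisfy the two inequalities $\tfrac{1}{b}+\tfrac{a_2}{b^{l+1}}<\tfrac{a_2+1}{a_2+2}$ and $\tfrac{a_2}{a_2+1}<\tfrac{1}{b}+\tfrac{a_2+1}{b^{l+1}}$.

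For part (ii), $b=3$: The case $l=1$ is handled by direct inspection of $a_2\in\{1,2\}$; only $a_2=1$ gives an overlap of positive length (for $a_2=2$ the intervals $[5/9,2/3]$ and $[2/3,3/4]$ touch only at the boundary). For $l\geq 2$, the right endpoint of $I_{\mathrm{B}}$ is at most $\tfrac13+\tfrac{3^l}{3^{l+1}}=\tfrac23$, while the left endpoint of $I_{\mathrm{CF}}$ is $\tfrac{a_2}{a_2+1}\geq\tfrac{3^{l-1}}{3^{l-1}+1}\geq \tfrac34$ (using $a_2\geq 3$), so $I_{\mathrm{CF}}$ lies strictly to the right of $I_{\mathrm{B}}$ and no overlap is possible.

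For part (i), $b=2$: Algebraic manipulation of the two overlap inequalities yields
\textbf{(A)} $a_2\bigl(a_2-(2^l-2)\bigr)<0$, i.e.\ $a_2<2^l-2$, and
\textbf{(B)} $a_2^2-(2^l-2)a_2+(2^l+1)>0$, a quadratic in $a_2$ with discriminant $2^l(2^l-8)$.
The cases $l=1,2$ are eliminated by (A) together with the range constraint $2^{l-1}\le a_2\le 2^l-1$. For $l=3$ the discriminant vanishes, so (B) reduces to $(a_2-3)^2>0$, which is automatic on the admissible range $\{4,5,6,7\}$; thus (A) leaves exactly $a_2\in\{4,5\}$, the two values permitted in the statement (with $5=2^3-3$).

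For $l\geq 4$ the discriminant is positive, and the strategy is to locate the larger root $r_l$ of (B). Plugging $a_2=2^l-3$ into the left side of (B) gives $4>0$, while plugging $a_2=2^l-4$ gives $-2^l+9<0$, so $r_l\in(2^l-4,\,2^l-3)$. Combined with (A), which forces $a_2\leq 2^l-3$, this pins down $a_2=2^l-3$ as the only integer satisfying both conditions in the upper region. One still needs to rule out integers below the smaller root of (B): since the product of the two roots is $2^l+1$, the smaller root is just slightly above $1$, certainly well below the range constraint $a_2\geq 2^{l-1}$. The main obstacle is the bookkeeping for $l\geq 4$—verifying that the two-sided quadratic inequality (B), once intersected with the window $[2^{l-1},2^l-1]$ and with the constraint (A), really isolates the single integer $2^l-3$.
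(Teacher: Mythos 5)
Your proof is correct and rests on the same overlap-of-intervals principle as the paper's. Your conditions (A) and (B) are exactly the two endpoint inequalities that appear in the paper's argument (the paper records the non-strict form $a_2^2-(2^l-2)a_2+2^l\geq 0$, which over the integers is equivalent to your strict $a_2^2-(2^l-2)a_2+(2^l+1)>0$), and your bracketing of the larger root between $2^l-4$ and $2^l-3$ for $l\geq 4$ is the explicit version of the paper's remark that $a_2$ lies to the right of the vertex with $2^l-3$ as the threshold; the check that the smaller root sits below $2$, via the product of roots, is a small detail the paper leaves implicit. For part (ii) you take a mild shortcut the paper does not: rather than pushing the quadratic inequality through for all $l$ and evaluating it at $a_2=3^{l-1}$, you observe directly that for $l\geq 2$ the right endpoint of the base-$3$ interval is at most $2/3$ while the left endpoint of the continued-fraction interval is at least $3/4$ once $a_2\geq 3$, so the two cannot meet; that is a cleaner route to the same conclusion.
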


\begin{proof}
(i) Let $b=2$, and suppose $a_2$ has $l$ binary digits. In order for the intervals in \eqref{eq:overlapping-intervals} to overlap, it is necessary on the one hand that $a_2^2-(2^l-2)a_2<0$, so that $a_2<2^l-2$; this implies $l\geq 3$. On the other hand, we must have $a_2^2-(2^l-2)a_2+2^l\geq 0$. When $l=3$, these inequalities are satisfied for $a_2=4, 5$. When $l\geq 4$, the last inequality is satisfied if and only if $a_2\geq 2^l-3$, bearing in mind that $a_2\geq 2^{l-1}$, and hence $a_2$ lies to the right of the vertex of the parabola $y=x^2-(2^l-2)x+2^l$. This gives the result for $b=2$.

(ii) Next, let $b=3$ and suppose $a_2$ has $l$ ternary digits. Note that here $a_1=1$. If the intervals in \eqref{eq:overlapping-intervals} overlap, we have in particular
\[
\frac{a_2}{a_2+1}<\frac{1}{3}+\frac{a_2+1}{3^{l+1}},
\]
which implies $a_2^2-2(3^l-1)a_2+3^l\geq 0$. Since $3^{l-1}\leq a_2<3^l$, $a_2$ lies to the left of the vertex of the parabola in this last inequality. Hence
\[
a_2^2-(2\cdot 3^l-2)a_2+3^l\leq 3^{2l-2}-2(3^l-1)3^{l-1}+3^l=-5\cdot 3^{l-1}(3^{l-1}-1),
\]
where the inequality is strict if $l=1$ and $a_2=2$. This gives a contradiction, unless $l=a_2=1$.
\end{proof}

\begin{lemma} \label{lem:more-digits}
If $x=[0;a_1 a_2\dots]$ is Trott in both base $b$ and in base $c>b$, then $b$ and $c$ must belong to the same interval $[k^2+1,k^2+k]$ for some $k$. Furthermore, if $a_2$ has $l$ digits in base $b$ and $m$ digits in base $c$, then $2\leq m<l$.
\end{lemma}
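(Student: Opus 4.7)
The first assertion reduces to showing $\lfloor\sqrt{b}\rfloor = \lfloor\sqrt{c}\rfloor =: k$, since by the opening of Section~\ref{sec:non-existence} the first partial quotient is forced to equal $\lfloor\sqrt{b}\rfloor$ in any Trott expansion in base $b$. Once this is established, $b,c\in\Gamma$ localizes them inside $\{k^2+1,\dots,k^2+k\}$ whenever $k\geq 2$. The exceptional case $k=1$ means $\{b,c\}\subseteq\{2,3\}$, and here I would apply Lemma~\ref{lem:a2-for-bases-2-and-3} directly: base $3$ forces $a_2=1$ while base $2$ requires $a_2\in\{4\}\cup\{2^l-3:l\geq 3\}$, leaving no compatible value.

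For the second assertion, fix $k\geq 2$ and $b<c$ in $\{k^2+1,\dots,k^2+k\}$. The inequality $l\geq m$ is immediate, since representing the same positive integer in a larger base can only decrease the digit count. To rule out $l=1$ (which would force $m=1$ as well, because then $a_2<b<c$), I would show that the two ``second-digit'' intervals $[k/b+a_2/b^2,\,k/b+(a_2+1)/b^2]$ and $[k/c+a_2/c^2,\,k/c+(a_2+1)/c^2]$ are disjoint for every $a_2\in\{1,\dots,b-1\}$. Clearing denominators, the overlap condition reduces to $b^2/(c-b)\geq a_2(c+b)+kbc$, whose right-hand side already exceeds $kbc>b^2$ because $kc>2b$. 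Hence $l\geq 2$.

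With $l\geq 2$, Lemma~\ref{lem:unique-choice} pins $a_2$ down from $(b,k,l)$, the only caveat being $b=5,\,l=2$, which I would dispose of by hand using the candidate values $a_2\in\{8,9\}$ flagged in the remark following that lemma. In every case a short computation gives $a_2\geq b(b-1)/k-1\geq k^3+k-1\geq k^2+k\geq c$, so $a_2$ occupies at least two base-$c$ digits and $m\geq 2$. Since $c>b\geq 5$, the $b=5$ caveat does not recur on the $c$-side, so Lemma~\ref{lem:unique-choice} applies there too, and $a_2$ is determined also from $(c,k,m)$.

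To rule out $l=m$, set $f(x,j):=x^j(x-k^2)/k - x/(k(x-k^2))$. Both applications of Lemma~\ref{lem:unique-choice} give $\lceil f(b,l)\rceil=a_2+1=\lceil f(c,m)\rceil$, and hence $|f(b,l)-f(c,m)|<1$. Supposing $l=m$, I would use the identity
\[
c^l(c-k^2) - b^l(b-k^2) = (c-b)\,b^l + (c-k^2)(c^l-b^l),
\]
together with the fact that the correction $x\mapsto -x/(k(x-k^2))$ is increasing on $(k^2,\infty)$ (so the difference of corrections is positive), to conclude $f(c,l)-f(b,l)\geq (c-b)b^l/k \geq b^l/k > 1$, using $b\geq k^2+1>k$. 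This contradicts the $<1$ bound, forcing $l>m$. The main obstacle will be confirming that the correction terms in $f$ genuinely fail to erode the dominant $(c-b)b^l/k$ contribution, and handling the isolated case $b=5,\,l=2$ without the uniqueness lemma; both require elementary but careful algebra.
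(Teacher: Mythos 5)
Your proposal is correct and covers all the cases, but you take a noticeably heavier route for the step $l\neq m$ than the paper does. The paper rules out $l=m$ with a one-line interval computation: if $a_2$ has $l$ digits in both bases, the number must lie in both $\bigl[\tfrac{k}{b}+\tfrac{a_2}{b^{l+1}},\tfrac{k}{b}+\tfrac{a_2+1}{b^{l+1}}\bigr]$ and the analogous $c$-interval, and these are disjoint because
\[
\frac{k}{b}+\frac{a_2}{b^{l+1}}-\Bigl(\frac{k}{c}+\frac{a_2+1}{c^{l+1}}\Bigr)
=\frac{k(c-b)}{bc}+a_2\Bigl(\frac{1}{b^{l+1}}-\frac{1}{c^{l+1}}\Bigr)-\frac{1}{c^{l+1}}
\geq \frac{1}{bc}-\frac{1}{c^2}>0.
\]
This argument works for every $l\geq 1$ simultaneously, so the paper has no need for your separate ``rule out $l=1$'' step; once $l>m\geq 1$ it knows $l\geq 2$ and can then invoke Lemma~\ref{lem:unique-choice} to get $m\geq 2$ exactly as you do. Your route instead proves $l\neq 1$ first by the same interval idea restricted to $l=1$, then applies Lemma~\ref{lem:unique-choice} on both sides and compares the two prescribed values of $a_2$ via the function $f(x,j)$. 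That works, but it imports more machinery than needed, and it inherits the exceptional case $b=5,\ l=2$ where Lemma~\ref{lem:unique-choice} is silent --- a loose end you correctly flag but leave to ``careful algebra.'' (It does close: for $b=5,\,c=6,\,l=m=2$ the base-$5$ candidates $a_2\in\{8,9\}$ are far from the base-$6$ value $a_2=34$.) One genuine point in your favor: the paper dismisses the first assertion as ``clear,'' but $\lfloor\sqrt b\rfloor=\lfloor\sqrt c\rfloor$ alone does not rule out $\{b,c\}=\{2,3\}$, since $3$ belongs to no interval $[k^2+1,k^2+k]$; your explicit appeal to Lemma~\ref{lem:a2-for-bases-2-and-3} closes that gap cleanly. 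Your bound $a_2\geq k^3+k-1\geq c$ for $m\geq 2$ is the same computation the paper performs.
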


\begin{proof}
The first statement is clear, since $a_1=k$ is forced. It is also clear that any integer has at least as many digits in base $b$ as in base $c$. Suppose $a_2$ has $l$ digits in both bases. Then $x$ lies in both of the intervals
\[
\left[\frac{k}{b}+\frac{a_2}{b^{l+1}},\frac{k}{b}+\frac{a_2+1}{b^{l+1}}\right] \qquad\mbox{and} \qquad
\left[\frac{k}{c}+\frac{a_2}{c^{l+1}},\frac{k}{c}+\frac{a_2+1}{c^{l+1}}\right].
\]
Hence these intervals must overlap. However, we claim that
\[
\frac{k}{c}+\frac{a_2+1}{c^{l+1}}<\frac{k}{b}+\frac{a_2}{b^{l+1}}.
\]
This follows since
\begin{align*}
\frac{k}{b}+\frac{a_2}{b^{l+1}}-\left(\frac{k}{c}+\frac{a_2+1}{c^{l+1}}\right)
&=\frac{k(c-b)}{bc}+a_2\left(\frac{1}{b^{l+1}}-\frac{1}{c^{l+1}}\right)-\frac{1}{c^{l+1}}\\
&\geq \frac{1}{bc}-\frac{1}{c^2}>0.
\end{align*}
This contradiction shows that $l>m$. Finally, we argue that $m$ cannot be 1. Since $l\geq 2$ and $b\geq k^2+1$, we have
\begin{align*}
a_2&\geq\left\lceil \frac{b^2(b-k^2)}{k}-\frac{b}{k(b-k^2)} \right\rceil-1
=\left\lceil \frac{b^l(b-k^2)}{k}-\frac{1}{k}-\frac{k}{b-k^2} \right\rceil-1\\
&\geq \frac{(k^2+1)^2-(k^2+1)}{k}-1=k^3+k-1\geq k^2+k\geq c,
\end{align*}
and it follows that $m\geq 2$.
\end{proof}

Note that it is necessary to look at $a_2$, since the first stage intervals
$\big[\frac{k}{b},\frac{k+1}{b}\big]$ and $\big[\frac{k}{c},\frac{k+1}{c}\big]$
always overlap. This follows because
\[
\frac{c}{b}\leq\frac{k^2+k}{k^2+1}=1+\frac{k-1}{k^2+1}<1+\frac{1}{k},
\]
and hence $\frac{k}{b}<\frac{k+1}{c}$. The other needed inequality is trivial.

\bigskip
We are now ready to start proving Theorem \ref{thm:multiple-Trott}. We begin with the following special case:

\begin{proposition} \label{prop:both-extremes}
Let $k\geq 2$. No number is Trott in both base $k^2+1$ and base $k^2+k$.
\end{proposition}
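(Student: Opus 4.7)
The plan is to combine the explicit closed-form expressions for $a_2$ in Corollary \ref{cor:a2-special-cases} with the digit-count constraint from Lemma \ref{lem:more-digits}, and then rule out the resulting Diophantine equation by a short congruence argument modulo $k^2$.

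First I would suppose, for contradiction, that $x=[0;a_1,a_2,\dots]$ is Trott in both $b=k^2+1$ and $c=k^2+k$. Let $l$ and $m$ be the number of digits of $a_2$ in bases $b$ and $c$ respectively. By Lemma \ref{lem:more-digits}, $2\leq m<l$, so in particular $l\geq 3$. (This automatic bound is what makes the boundary case $k=2$, $b=5$ work, since Corollary \ref{cor:a2-special-cases}(i) requires $l\geq 3$ precisely when $b=5$.) Hence both parts of Corollary \ref{cor:a2-special-cases} apply, giving simultaneously
\[
a_2=\frac{(k^2+1)^l-(k^2+1)}{k}-1 \qquad \text{and} \qquad a_2=(k^2+k)^m-2.
\]

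I would then equate these two expressions and clear denominators to obtain
\[
(k^2+1)^l = k(k^2+k)^m+(k^2-k+1),
\]
and reduce both sides modulo $k^2$. The left side satisfies $(k^2+1)^l\equiv 1\pmod{k^2}$. For the right side, since $m\geq 2$ we have $k(k^2+k)^m=k^{m+1}(k+1)^m$, which is divisible by $k^3$ and hence by $k^2$; so the right side is $\equiv k^2-k+1\equiv 1-k\pmod{k^2}$. Therefore $1\equiv 1-k\pmod{k^2}$, i.e.\ $k^2\mid k$, which is absurd for $k\geq 2$.

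There is really no main obstacle here beyond checking the hypotheses of the corollary: the whole proof is the modular contradiction above. The only subtlety worth flagging is the case $k=2$, and as noted that case is already subsumed because $l>m\geq 2$ forces $l\geq 3$ so that Corollary \ref{cor:a2-special-cases}(i) applies uniformly for all $k\geq 2$.
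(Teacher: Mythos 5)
Your proof is correct and follows essentially the same route as the paper: equate the two closed-form expressions for $a_2$ from Corollary~\ref{cor:a2-special-cases}, clear denominators, and derive a contradiction modulo $k^2$ (the paper writes the resulting identity as $(k^2+1)^l - k^2 - 1 = k(k^2+k)^m - k$ and observes the two sides disagree mod $k^2$, which is the same computation as yours). The paper also uses $l\geq 3$ from Lemma~\ref{lem:more-digits} to justify applying the corollary, so your remark about the $b=5$ case matches the paper's implicit reasoning.
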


\begin{proof}
If $a_2$ has $l$ digits in base $k^2+1$ and $m$ digits in base $k^2+k$, then $l\geq 3$ by Lemma \ref{lem:more-digits}, so Corollary \ref{cor:a2-special-cases} yields
\[
\frac{(k^2+1)^l-(k^2+1)}{k}-1=(k^2+k)^m-2.
\]
Hence, $(k^2+1)^l-k^2-1=k(k^2+k)^m-k$. But this is impossible, since both the left hand side and $k(k^2+k)^m$ are divisible by $k^2$, whereas $-k$ clearly is not.
\end{proof}

The next lemma develops two basic inequalities that we use repeatedly in the sequel.

\begin{lemma} \label{lem:sandwich-inequalities}
Let $k^2+1\leq b<c\leq k^2+k$ for $k\geq 2$, and suppose $[0;k,a_2,a_3,\dots]\in T_b\cap T_c$. Let $a_2$ have $l$ digits in base $b$, and $m$ digits in base $c$. Then
\begin{equation} \label{eq:difference-sandwich}
\frac{k^2(c-b)}{(b-k^2)(c-k^2)}-k<b^l(b-k^2)-c^m(c-k^2)<\frac{k^2(c-b)}{(b-k^2)(c-k^2)}+k,
\end{equation}
and in particular,
\begin{equation} \label{eq:k-square-bound}
\big|b^l(b-k^2)-c^m(c-k^2)\big|<k^2.
\end{equation}
\end{lemma}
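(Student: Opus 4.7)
The plan is to use Lemma \ref{lem:unique-choice} applied to both bases $b$ and $c$, exploiting the fact that both ceiling expressions must equal the common value $a_2+1$. First I need to verify the hypotheses of Lemma \ref{lem:unique-choice} are in force: by Lemma \ref{lem:more-digits}, we have $l > m \geq 2$, so $l \geq 3$, which covers the special case $b=5$; for base $c$, we have $m \geq 2$ and $c \neq 5$ (since $c > b \geq 5$ when $k=2$), so the basic hypothesis $m \geq 2$ suffices. Hence Lemma \ref{lem:unique-choice} yields
\[
a_2+1 = \left\lceil \frac{b^l(b-k^2)}{k} - \frac{b}{k(b-k^2)} \right\rceil = \left\lceil \frac{c^m(c-k^2)}{k} - \frac{c}{k(c-k^2)} \right\rceil.
\]

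The next step is to translate this ceiling identity into an inequality. Each expression lies in the half-open interval $(a_2, a_2+1]$, so their difference is strictly less than $1$ in absolute value:
\[
\left| \frac{b^l(b-k^2)}{k} - \frac{b}{k(b-k^2)} - \frac{c^m(c-k^2)}{k} + \frac{c}{k(c-k^2)}\right| < 1.
\]
Multiplying by $k$ and using the partial-fractions simplification
\[
\frac{c}{c-k^2} - \frac{b}{b-k^2} = \frac{b(c-k^2) - c(b-k^2)}{(b-k^2)(c-k^2)} \cdot (-1) \cdot(-1) = \frac{k^2(c-b)}{(b-k^2)(c-k^2)}
\]
(a direct computation) gives exactly the claimed inequality \eqref{eq:difference-sandwich}.

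For \eqref{eq:k-square-bound}, I must bound the quantity $D := k^2(c-b)/\{(b-k^2)(c-k^2)\}$. The lower bound is immediate: $D \geq 0$ since $c > b$, so $b^l(b-k^2) - c^m(c-k^2) > D - k \geq -k > -k^2$. For the upper bound, I use the elementary facts $b - k^2 \geq 1$, $c - k^2 \leq k$, and $c - b \leq (c-k^2) - 1$, which combine to give
\[
D \leq k^2 \cdot \frac{c-k^2-1}{c-k^2} = k^2\left(1 - \frac{1}{c-k^2}\right) \leq k^2\left(1 - \frac{1}{k}\right) = k(k-1),
\]
so $b^l(b-k^2) - c^m(c-k^2) < D + k \leq k^2$, completing \eqref{eq:k-square-bound}.

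There is no serious obstacle here: once one recognizes that tightness of the ceiling operator gives a clean comparison and that the $b$-versus-$c$ difference telescopes via the partial-fraction identity, the rest is routine estimation using the constraints $k^2+1 \leq b < c \leq k^2+k$. The only subtle point is making sure Lemma \ref{lem:unique-choice} applies, which I handle first via Lemma \ref{lem:more-digits}.
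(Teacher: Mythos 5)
Your proof is correct and follows essentially the same route as the paper: apply Lemma~\ref{lem:unique-choice} to both bases, use that the two ceiling arguments lie in the same unit interval $(a_2, a_2+1]$ to get a difference less than $1$, multiply by $k$, and bound the resulting partial-fraction term. Your explicit verification that Lemma~\ref{lem:unique-choice} applies in both bases (via Lemma~\ref{lem:more-digits}) is a worthwhile addition that the paper leaves implicit. One small sign slip in the intermediate display: in fact
\[
\frac{c}{c-k^2} - \frac{b}{b-k^2} \;=\; \frac{c(b-k^2) - b(c-k^2)}{(b-k^2)(c-k^2)} \;=\; -\frac{k^2(c-b)}{(b-k^2)(c-k^2)},
\]
so the quantity that enters with a plus sign is $\frac{b}{b-k^2} - \frac{c}{c-k^2}$; since the two $b$-terms and $c$-terms already appear with opposite signs after multiplying by $k$, this is exactly what you need, and the inequality \eqref{eq:difference-sandwich} you state is the correct one. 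The remainder (bounding $D = k^2(c-b)/\{(b-k^2)(c-k^2)\}$ above by $k(k-1)$) is equivalent to the paper's monotonicity argument.
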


\begin{proof}
By \eqref{eq:unique-a2} we have
\[
\left|\frac{b^l(b-k^2)}{k}-\frac{b}{k(b-k^2)}-\left(\frac{c^m(c-k^2)}{k}-\frac{c}{k(c-k^2)}\right)\right|<1,
\]
and so
\[
\left|b^l(b-k^2)-c^m(c-k^2)-\left(\frac{b}{b-k^2}-\frac{c}{c-k^2}\right)\right|<k.
\]
Note
\[
\frac{b}{b-k^2}-\frac{c}{c-k^2}=k^2\left(\frac{1}{b-k^2}-\frac{1}{c-k^2}\right)=\frac{k^2(c-b)}{(b-k^2)(c-k^2)}.
\]
This gives \eqref{eq:difference-sandwich}. Now notice that the middle expression above is increasing in $c$ and decreasing in $b$, so it is maximized when $b=k^2+1$ and $c=k^2+k$. Hence
\[
\frac{k^2(c-b)}{(b-k^2)(c-k^2)}+k\leq \frac{k^2(k-1)}{1\cdot k}+k=k^2.
\]
This proves \eqref{eq:k-square-bound}.
\end{proof}

\begin{proposition} \label{prop:one-apart}
No number is Trott in bases $b$ and $b+1$, for any $b$.
\end{proposition}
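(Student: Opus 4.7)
The plan is to split on $k := \lfloor\sqrt{b}\rfloor$. Condition (i) of Theorem \ref{thm:multiple-Trott} rules out any case where $\lfloor\sqrt{b}\rfloor \neq \lfloor\sqrt{b+1}\rfloor$, so $b$ and $b+1$ must lie in the same block $\{k^2+1,\dots,k^2+k\}$ of $\Gamma$, with the lone exception $(b,b+1)=(2,3)$. That exceptional pair I would dispose of via Lemma \ref{lem:a2-for-bases-2-and-3}: in base $2$ the second partial quotient satisfies $a_2 \in \{4\} \cup \{2^l-3 : l\geq 3\}$, while in base $3$ it must equal $1$, which is incompatible. So henceforth I may write $b = k^2+j$ with $k \geq 2$ and $1 \leq j \leq k-1$, and $b+1 = k^2+j+1$.

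Next I would let $a_2$ have $l$ digits in base $b$ and $m$ digits in base $b+1$; Lemma \ref{lem:more-digits} gives $l > m \geq 2$, so $l \geq 3$ (accommodating the $b = 5$ caveat in Lemma \ref{lem:unique-choice}). Setting $N := jb^l - (j+1)(b+1)^m$, Lemma \ref{lem:sandwich-inequalities} yields $|N| < k^2 < b$, and modulo $b$ the identities $jb^l \equiv 0$ and $(b+1)^m \equiv 1$ give $N \equiv -(j+1) \pmod{b}$. The size bound then forces $N \in \{-(j+1),\, k^2-1\}$, the latter being the representative $-(j+1)+b$.

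The decisive step is to rule out both candidates by reducing modulo $b+1$, where $b \equiv -1$ and $(b+1)^m \equiv 0$ yield $N \equiv j(-1)^l \pmod{b+1}$. If $N=-(j+1)$, then $j(-1)^l \equiv -(j+1) \pmod{b+1}$: $l$ even forces $(b+1)\mid 2j+1$, while $l$ odd forces $(b+1)\mid 1$. If $N=k^2-1$, the congruence $k^2 \equiv -(j+1) \pmod{b+1}$ rewrites the condition as $j(-1)^l \equiv -(j+2) \pmod{b+1}$: $l$ even forces $(b+1)\mid 2(j+1)$, while $l$ odd forces $(b+1)\mid 2$. Each of the four divisibilities is impossible because $b+1 \geq k^2+2$ strictly exceeds the positive integers $2j+1 \leq 2k-1$, $2(j+1) \leq 2k$, $1$, and $2$ respectively (using $2k < k^2+2$ for $k \geq 2$). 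No step is a real obstacle; the substance of the argument is the rigidity forced by the coprime moduli $b$ and $b+1$ simultaneously pinning down the small integer $N$, which is precisely what makes the case $c=b+1$ much cleaner than larger gaps.
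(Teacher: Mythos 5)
Your proof is correct, and it follows essentially the same route as the paper's: set $N := jb^l-(j+1)(b+1)^m$, pin it down by a size bound from Lemma \ref{lem:sandwich-inequalities} together with the residue $N\equiv-(j+1)\pmod b$, and then derive a contradiction from $N\equiv j(-1)^l\pmod{b+1}$. Two small differences in execution are worth noting. First, the paper plugs $c=b+1$ directly into \eqref{eq:difference-sandwich} to get the sharper bound $|N|<k^2/2+k$; since $k^2-1>k^2/2+k$ for $k\geq3$, this eliminates the candidate $N=k^2-1$ by size alone and leaves only $N=-(j+1)$ for the mod-$(b+1)$ step. You instead use the coarser bound $|N|<k^2$ from \eqref{eq:k-square-bound}, which leaves both residues alive and forces you to run the mod-$(b+1)$ analysis on both; that analysis goes through cleanly, so nothing is lost. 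Second, the paper dispatches $b=5$ separately via Proposition \ref{prop:both-extremes} and then works with $k\geq3$, whereas your stronger modular case analysis absorbs $k=2$ into the general argument (the relevant divisors $2j+1,\,2(j+1),\,1,\,2$ are all below $b+1\geq k^2+2$ already for $k\geq2$). One cosmetic point: citing ``condition (i) of Theorem~\ref{thm:multiple-Trott}'' inside a proof that is itself a component of that theorem reads as circular; the content you want is Lemma~\ref{lem:more-digits}, which you do invoke a line later, so just cite the lemma directly.
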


\begin{proof}
Of course we may assume that $b,b+1\in\Gamma$, as otherwise the statement is trivial.
We first treat two special cases. When $b=2$, the statement follows immediately from Lemma \ref{lem:a2-for-bases-2-and-3}, and when $b=5$ it follows from Proposition \ref{prop:both-extremes}.

We may henceforth assume that $k^2+1\leq b<k^2+k$ with $k\geq 3$. Let $a_2$ have $l$ digits in base $b$ and $m$ digits in base $b+1$. Setting $c=b+1$ in \eqref{eq:difference-sandwich} we obtain
\[
\left|b^l(b-k^2)-(b+1)^m(b+1-k^2)\right|<\frac{k^2}{(b-k^2)(b+1-k^2)}+k\leq \frac{k^2}{2}+k.
\]
Let $j:=b-k^2$; then the above can be written as
\[
|jb^l-(j+1)(b+1)^m|<\frac{k^2}{2}+k.
\]
Now $jb^l-(j+1)(b+1)^m$ is congruent to $-(j+1)$ modulo $b$, and to $(-1)^l j$ modulo $b+1$. Here we have $1\leq j<k$. Observe that, since $k\geq 3$, 
\[
-(j+1)-b<-k^2\leq -\left(\frac{k^2}{2}+k\right), 
\]
and
\[
-(j+1)+b=k^2-1>\frac{k^2}{2}+k.
\]
Therefore, it must be the case that $jb^l-(j+1)(b+1)^m=-(j+1)$. But $-(j+1)\not\equiv \pm j \mod (b+1)$, since $2j+1<2k<k^2<b+1$. This contradiction completes the proof.
\end{proof}

In an analogous vein, for bases that are two apart we have the following partial result.

\begin{proposition} \label{prop:difference-of-two}
Let $k\geq 3$ and $1\leq j\leq k-2$, and suppose 
\begin{equation} \label{eq:j-limit}
j\leq\sqrt{\frac{2k^2}{k-2}+1}-1.
\end{equation}
Then no number is Trott in both base $k^2+j$ and $k^2+j+2$.
\end{proposition}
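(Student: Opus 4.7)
The strategy mirrors Proposition~\ref{prop:one-apart}, but with $c=b+2$ I reduce modulo $b+1$ (on which $b\equiv -1$ and $c\equiv 1$) rather than modulo $b$, because $(b+2)^m\not\equiv 1\pmod b$ in general. Writing $j=b-k^2\in\{1,\dots,k-2\}$ and applying Lemma~\ref{lem:sandwich-inequalities} with $c-k^2=j+2$ gives
\[
\tfrac{2k^2}{j(j+2)}-k \;<\; N \;<\; \tfrac{2k^2}{j(j+2)}+k,
\]
where $N:=jb^l-(j+2)c^m$. The hypothesis unpacks to $j(j+2)\leq\tfrac{2k^2}{k-2}$, so $\tfrac{2k^2}{j(j+2)}\geq k-2$ and therefore $N>-2$. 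Since $b$ and $c$ share parity, $N$ is always even; combined with $N>-2$ this gives $N\geq 0$. An easy estimate shows $\tfrac{2k^2}{j(j+2)}+k<b$, so in fact $N\in[0,b)$.

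Reducing modulo $b+1$ next yields $N\equiv j(-1)^l-(j+2)\pmod{b+1}$. Within $[0,b)\subset[0,b+1)$ this congruence has a unique solution in each parity class of $l$: namely, $N=b-1$ when $l$ is even and $N=b-2j-1$ when $l$ is odd (the latter is nonnegative since $b>2j+1$).

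The closing step is to rule out both candidate values. For $l$ even, I will verify the elementary inequality $(k^2-k+j-1)\,j(j+2)\geq 2k^2$---equivalently, $b-1\geq\tfrac{2k^2}{j(j+2)}+k$---which for $j=1$ reduces to $k\geq 3$ and for $j\geq 2$ is routine; this contradicts the strict upper bound on $N$. For $l$ odd, if $b$ is even then $b-2j-1$ is odd while $N$ must be even, a contradiction. If $b$ is odd, the analogous bound $(k^2-k-j-1)\,j(j+2)>2k^2$ handles every $(k,j)$ under the hypothesis with $b$ odd, with the single exception of $(k,j)=(4,1)$.

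That remaining edge case $b=17$, $c=19$ I dispatch by a direct mod-$19$ computation: the congruence $N\equiv j(-2)^l\pmod c$ with $N=14$ yields $2^l\equiv 5\pmod{19}$, and since $2$ has order $18$ modulo $19$ with $2^{16}\equiv 5$, this forces $l\equiv 16\pmod{18}$, which is even, contradicting $l$ odd. I expect the main obstacle to be precisely this stubborn edge case, which does not yield to the bound argument and must be settled by a separate modular check; everything else reduces to the same mod-$(b+1)$/parity/bound pattern used in Proposition~\ref{prop:one-apart}.
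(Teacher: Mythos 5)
Your proof is correct, and while it follows the same broad template as the paper---set $N:=jb^l-(j+2)c^m$, apply Lemma~\ref{lem:sandwich-inequalities}, and reduce modulo $b+1$---the way you pin down $N$ is genuinely sharper. The paper stops at the lower bound $N>-2$ (from the hypothesis and the left half of \eqref{eq:difference-sandwich}), lifts this to $N\geq -2j-2+(b+1)=k^2-j-1$ by merging both parities of $l$, and compares against the right half of \eqref{eq:difference-sandwich}; this bound just barely fails for $(k,j)=(3,1)$ and $(k,j)=(4,1)$, so the paper must defer $(3,1)$ to Proposition~\ref{prop:both-extremes} and run a separate mod-$19$ check for $(4,1)$. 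You instead notice two extra constraints the paper doesn't use: $b$ and $c$ have the same parity so $N$ is always even (hence $N\geq 0$), and $\tfrac{2k^2}{j(j+2)}+k<b$ so $N<b$. This forces $N$ to be \emph{exactly} $b-1$ ($l$ even) or $b-2j-1$ ($l$ odd), rather than merely bounded below. As a payoff, the $l$-odd case dies immediately whenever $b$ is even (parity mismatch), the $l$-even case closes for all $(k,j)$ including $(3,1)$, and the only surviving case is $(k,j)=(4,1)$---which both you and the paper kill with the same mod-$19$ obstruction $2^l\equiv 5\pmod{19}$ with $l$ odd, you reading $N=14$ directly off your pinned-down value, the paper arriving there through the explicit $a_2$ formula \eqref{eq:unique-a2}. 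Your route is self-contained (no appeal to Proposition~\ref{prop:both-extremes}) and reduces the exceptional-case bookkeeping at the cost of the parity observation, which is cheap; the paper's route is marginally shorter at the front end but leans on an external proposition for bases $10$ and $12$.
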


\begin{proof}
Suppose the conclusion of the proposition is false. Then by \eqref{eq:difference-sandwich} we would have
\begin{equation} \label{eq:sandwich-difference-2}
\frac{2k^2}{j(j+2)}-k<j(k^2+j)^l-(j+2)(k^2+j+2)^m<\frac{2k^2}{j(j+2)}+k,
\end{equation}
where $l$ is the number of digits of $a_2$ in base $k^2+j$, and $m$ the number of digits of $a_2$ in base $k^2+j+2$. 

Now observe that $j(k^2+j)^l-(j+2)(k^2+j+2)^m$ is congruent to $j(-1)^l-(j+2)$ modulo $k^2+j+1$, hence is either $-2$ or $-2j-2$ modulo $k^2+j+1$. But \eqref{eq:j-limit}, together with the left half of \eqref{eq:sandwich-difference-2} implies that $j(k^2+j)^l-(j+2)(k^2+j+2)^m>-2$, so it must be the case that 
\[
j(k^2+j)^l-(j+2)(k^2+j+2)^m\geq -2j-2+(k^2+j+1)=k^2-j-1.
\]
We will now show that this, too is impossible. If $j\geq 2$, then 
\begin{equation*} 
k^2-j-1\geq k^2-k\geq \frac{k^2}{4}+k \geq \frac{2k^2}{j(j+2)}+k,
\end{equation*}
contradicting the right half of \eqref{eq:sandwich-difference-2}. This leaves the case $j=1$. Here 
\[
k^2-j-1=k^2-2\geq \frac{2k^2}{3}+k=\frac{2k^2}{j(j+2)}+k,
\]
provided that $k\geq 5$, in which case we again have a contradiction. The case $k=3$, $j=1$ (bases 10 and 12) is covered by Proposition \ref{prop:both-extremes}, so it only remains to check the case $k=4$, $j=1$ (i.e. bases 17 and 19). Here we can use \eqref{eq:unique-a2} directly to find that
\[
a_2=\frac{17^l-21}{4}=\begin{cases}
\frac{3\cdot 19^m-9}{4} & \mbox{if $m$ is odd},\\
\frac{3\cdot 19^m-7}{4} & \mbox{if $m$ is even},
\end{cases}
\]
and hence
\[
17^l-3\cdot 19^m=\begin{cases}
12 & \mbox{if $m$ is odd},\\
14 & \mbox{if $m$ is even}.
\end{cases}
\]
We can immediately rule out 12 by considering both sides modulo 3. But 14 does not work either: considering first the equation modulo 6 we see that $l$ must be odd. Now consider the equation modulo 19, which gives $(-2)^l\equiv -5 \mod 19$, and since $l$ is odd, this becomes $2^l\equiv 5 \mod 19$. However, computing $2,2^3,2^5,\dots,2^{17}$ modulo 19 we get $2,8,13,14,18,15,3,12,10$, and higher powers simply repeat this pattern. So there is no odd $l$ such that $2^l\equiv 5 \mod 19$.
\end{proof}

Our last two results require the following lower bound on $m$.

\begin{lemma} \label{lem:m-lower-bound}
Assume the hypotheses of Lemma \ref{lem:sandwich-inequalities}. Then $m>k\log k$.
\end{lemma}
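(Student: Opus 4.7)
The plan is to extract a clean lower bound on $m$ by combining the sandwich inequality of Lemma \ref{lem:sandwich-inequalities} with the strict inequality $l > m$ coming from Lemma \ref{lem:more-digits}, and then to compare this bound with $k\log k$.

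Setting $i := b-k^2$ and $j := c-k^2$, so that $1 \leq i < j \leq k$, inequality \eqref{eq:k-square-bound} reads $|ib^l - jc^m| < k^2$. Since $c^m \geq c \geq k^2+1 > k^2$, I would deduce
\[
b^l \;\leq\; ib^l \;\leq\; jc^m + (k^2-1) \;<\; kc^m + k^2 \;\leq\; (k+1)c^m.
\]
Taking logarithms and invoking $l \geq m+1$ from Lemma \ref{lem:more-digits} yields $(m+1)\log b \leq m\log c + \log(k+1)$, which rearranges to the key estimate
\[
m \;\geq\; \frac{\log\bigl(b/(k+1)\bigr)}{\log(c/b)}.
\]
A short monotonicity check (the right-hand side is increasing in $b$ and decreasing in $c$) shows this bound is minimized over $\{k^2+1 \leq b < c \leq k^2+k\}$ at the corner $(b,c) = (k^2+1,\,k^2+k)$. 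Applying $\log(1+x) \leq x$ in the denominator and the identity $(k^2+1)/(k+1) = (k-1) + 2/(k+1) > k-1$ in the numerator, the bound simplifies to
\[
m \;\geq\; \frac{(k^2+1)\log(k-1)}{k-1} \;=\; \Bigl(k+1+\frac{2}{k-1}\Bigr)\log(k-1).
\]

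It then remains to verify that this last quantity exceeds $k\log k$, which I would split into cases. For $k = 2$ the above degenerates (since $\log(k-1)=0$), but Lemma \ref{lem:more-digits} already delivers $m \geq 2 > 2\log 2$. For $k = 3$ the required inequality is $5\log 2 > 3\log 3$, i.e., $32 > 27$. For $k \geq 4$, I would use $\log(k-1) \geq \log k - 1/(k-1)$ (an easy consequence of $\log(1+x) \leq x$) to reduce the required inequality $(k+1+2/(k-1))\log(k-1) > k\log k$ to the much simpler $\log k > 1 + 2/(k^2-1)$, which holds already at $k=4$ (since $\log 4 \approx 1.386 > 1.134$) and improves as $k$ grows. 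The main delicacy is that this smooth asymptotic estimate just barely fails at $k = 3$ (where $\log 3 \approx 1.099 < 1.25$), so that case must be confirmed by the sharp arithmetic fact $32 > 27$ rather than absorbed into a single uniform bound for all $k \geq 3$.
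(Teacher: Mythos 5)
Your proof is correct and follows the same overall strategy as the paper: start from \eqref{eq:k-square-bound}, use $l\geq m+1$ from Lemma \ref{lem:more-digits} to relate $b^{m+1}$ to $c^m$, take logarithms, and bound $\log(c/b)\leq (c-b)/b$ to extract a lower bound on $m$. The differences are in bookkeeping. The paper retains the exact factors $b-k^2$ and $c-k^2$ and, using $l\geq 3$ (so $b^l>k^6$), folds the additive error $-k^2$ from \eqref{eq:k-square-bound} into a multiplicative factor $1-k^{-4}$; this yields the uniform final inequality $(1-k^{-4})(k^2+1)>k^2$, valid for all $k\geq 2$ with no case split. You instead discard those factors early, replacing $b-k^2\geq 1$ and $c-k^2\leq k$ to get the simpler $b^l<(k+1)c^m$, and you verify the corner-minimization and the final comparison against $k\log k$ by hand; the cost is that your bound $(k+1+2/(k-1))\log(k-1)$ degenerates at $k=2$ and only barely clears $k\log k$ at $k=3$, forcing the two small-$k$ cases to be handled separately (using $m\geq 2$ for $k=2$ and $32>27$ for $k=3$). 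Both routes land at the same conclusion and your case analysis is airtight, so this is a valid proof; the paper's version is marginally slicker in avoiding the split, while yours is arguably more transparent about where the slack lives.
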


\begin{proof}
From \eqref{eq:k-square-bound} we have $c^m(c-k^2)>b^l(b-k^2)-k^2$.
Lemma \ref{lem:more-digits} implies $l\geq 3$, and so $b^l\geq b^3>k^6=k^4 k^2$. It follows that
\[
c^m(c-k^2)>b^l(b-k^2-k^{-4})\geq b^{m+1}(b-k^2-k^{-4}),
\]
and so
\[
\left(\frac{c}{b}\right)^m>\frac{b(b-k^2-k^{-4})}{c-k^2}.
\]
Hence, taking logs and writing $\frac{c}{b}=1+\frac{c-b}{b}$, we obtain
\begin{align*}
m&>\frac{\log b+\log(b-k^2-k^{-4})-\log(c-k^2)}{\log\left(1+\frac{c-b}{b}\right)}\\
&>\frac{b}{c-b}\left(\log b+\log(b-k^2-k^{-4})-\log(c-k^2)\right)\\
&\geq \frac{k^2+1}{k}\left(\log(k^2+1)+\log(1-k^{-4})-\log k\right)\\
&=\frac{k^2+1}{k}\log\left((1-k^{-4})\frac{k^2+1}{k}\right)\\
&>k\log k,
\end{align*}
where the second inequality follows since $\log(1+t)<t$ for all $t>0$; the third inequality follows since $b-k^2\geq 1$ and $c-k^2\leq k$; and the last inequality follows since $(1-k^{-4})(k^2+1)>k^2$ for all $k\geq 2$.
\end{proof}

\begin{proposition} \label{thm:common-factor}
If $\gcd(b,c)>1$, then $T_b\cap T_c=\emptyset$.
\end{proposition}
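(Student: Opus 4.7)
The plan is to split according to whether the integer $b^l i - c^m j$ (with $i:=b-k^2$, $j:=c-k^2$) vanishes, extending the $p$-adic strategies from Propositions~\ref{prop:both-extremes}--\ref{prop:difference-of-two}. \emph{Setup:} by Lemma~\ref{lem:more-digits} I may assume $b,c\in[k^2+1,k^2+k]$ for some $k\geq 2$ and that $a_2$ has $l$ base-$b$ digits and $m$ base-$c$ digits with $l>m\geq 2$; Lemma~\ref{lem:m-lower-bound} gives $m>k\ln k$. Since $\Gamma\cap[5,6]=\{5,6\}$ has gcd $1$, the hypothesis $d:=\gcd(b,c)\geq 2$ forces $k\geq 3$, and hence $m>k\ln k>2\log_2 k$. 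Lemma~\ref{lem:sandwich-inequalities} supplies the key bound $|b^l i - c^m j|<k^2$.

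\emph{Nonzero case:} since $d\mid b$ and $d\mid c$, we have $d^m\mid b^l i$ (as $l\geq m$) and $d^m\mid c^m j$, so $d^m$ divides the difference. If $b^l i\neq c^m j$, then $d^m\leq|b^l i - c^m j|<k^2$, forcing $m<2\log_d k\leq 2\log_2 k$, contradicting $m>k\ln k$.

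\emph{Zero case:} suppose $b^l i = c^m j$, so that $l v_p(b)+v_p(i)=m v_p(c)+v_p(j)$ for every prime $p$. First I would show that $b$ and $c$ share every prime factor: a prime $p\mid b$ with $p\nmid c$ would give $l\leq l v_p(b)=v_p(j)-v_p(i)\leq\log_2 k$, violating $l>k\ln k$; the symmetric argument handles primes of $c$. Hence every prime of $bc$ divides $d$. Then I would finish by a dichotomy on exponents. If $v_p(b)\leq v_p(c)$ for every $p$, then $b\mid c$, which is impossible since $b<c\leq k^2+k<2(k^2+1)\leq 2b$. Otherwise some prime $q\mid d$ has $v_q(b)=v_q(c)+t$ with $t\geq 1$; substituting into the equation yields $(l-m)v_q(c)+lt=v_q(j)-v_q(i)$, whose left side is $\geq l>\log_2 k$ and whose right side has absolute value $\leq\log_2 k$, the final contradiction.

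The main obstacle I anticipate is the zero case, since the $d^m$-divisibility argument of the nonzero case is vacuous when the difference is $0$. The crux is the extreme rigidity imposed by forcing $b$ and $c$ to share every prime divisor while sitting in the narrow interval $[k^2+1,k^2+k]$: either one divides the other (ruled out by $c<2b$) or there is a ``dominant'' prime of $b$ whose $p$-adic equation produces a positive integer much larger than any admissible $v_p(j)-v_p(i)$.
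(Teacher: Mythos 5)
Your proof is correct, and it follows the paper's broad strategy: invoke Lemmas~\ref{lem:more-digits}, \ref{lem:sandwich-inequalities}, \ref{lem:m-lower-bound}, and then split on whether $b^l(b-k^2)-c^m(c-k^2)$ vanishes. The nonzero case is essentially the paper's argument (the paper bounds $p^m<k^2$ for a common prime $p$, you bound $d^m<k^2$ with $d=\gcd(b,c)$; same contradiction with $m>k\log k$). The zero case, however, is where you diverge. The paper's route is shorter: writing $b^l i=c^m j$ as $b^{l-m}(b/d)^m i=(c/d)^m j$ and using $\gcd(b/d,c/d)=1$ together with $d\neq b$ (from $c<2b$), it picks any prime $p\mid(b/d)$ and observes $p^m\mid j\leq k$, forcing $m\leq\log_2 k$ in one step. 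Your route instead runs a full $p$-adic valuation analysis on $l\,v_p(b)+v_p(i)=m\,v_p(c)+v_p(j)$: first eliminating any prime dividing exactly one of $b,c$, then using the dichotomy ``$b\mid c$ or some $q$ has $v_q(b)>v_q(c)$'' and ruling out both. This is more elaborate but sound; the paper's version buys brevity by factoring out $d$ once rather than scanning all primes. One could also note that your two-stage zero-case argument is slightly redundant—the first sub-step (primes dividing exactly one of $b,c$) is already subsumed by the final dichotomy, since such a prime $q$ automatically satisfies $v_q(b)>v_q(c)$ or $v_q(c)>v_q(b)$—but that doesn't affect correctness.
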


\begin{proof}
Suppose $x=[0;a_1 a_2\dots]$ is Trott in both base $b$ and base $c$. Fix $k\in\NN$ such that $k^2+1\leq b<c\leq k^2+k$. Since $\gcd(b,c)>1$, it follows that $k\geq 3$.
Let $l$ and $m$ be the number of digits of $a_2$ in base $b$, resp.~base $c$. Then $l>m$.

First, we show that $b^l(b-k^2)-c^m(c-k^2)\neq 0$. Suppose otherwise. Let $d=\gcd(b,c)$. Then
\[
b^{l-m}\left(\frac{b}{d}\right)^m(b-k^2)=\left(\frac{c}{d}\right)^m(c-k^2).
\]
Since $c<2b$, $b$ does not divide $c$ and so $d\neq b$. Hence $b/d$ has a prime factor $p$. But $p\nmid(c/d)$, so $p^m|c-k^2$ and therefore $p^m\leq c-k^2\leq k$. Thus,
\[
m\leq \frac{\log k}{\log p}\leq \frac{\log k}{\log 2}<k\log k.
\]
This contradicts Lemma \ref{lem:m-lower-bound}, and hence $b^l(b-k^2)-c^m(c-k^2)\neq 0$.

Now let $p$ be a common prime factor of $b$ and $c$. Since $m<l$, $p^m$ divides $b^l(b-k^2)-c^m(c-k^2)$. On the other hand, from the above argument and \eqref{eq:k-square-bound} we have $0<|b^l(b-k^2)-c^m(c-k^2)|<k^2$. It follows that $p^m<k^2$, and so
\[
m<\frac{\log(k^2)}{\log p}\leq \frac{2\log k}{\log 2}\leq k\log k,
\]
where the last inequality holds since $k\geq 3$. This again contradicts Lemma \ref{lem:m-lower-bound}.
\end{proof}


When $b$ and $c$ are large enough, we get the desired result without any further assumptions:

\begin{proposition} \label{prop:very-large-b}
Let $c>b>1.185\times 10^{29}$. Then $T_b\cap T_c=\emptyset$.
\end{proposition}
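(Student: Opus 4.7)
The plan is to combine the quantitative inequality $|b^l(b-k^2) - c^m(c-k^2)| < k^2$ from Lemma \ref{lem:sandwich-inequalities} with a lower bound from Baker's theory of linear forms in logarithms, and then use the inequality $m > k\log k$ from Lemma \ref{lem:m-lower-bound} to force a contradiction once $k$ (and hence $b$) is sufficiently large. First, assume for contradiction that $x \in T_b \cap T_c$. By Lemma \ref{lem:more-digits}, both $b$ and $c$ lie in the same window $[k^2+1, k^2+k]$ for some $k \geq 2$, and the integer $a_2$ has $l$ digits in base $b$ and $m < l$ digits in base $c$.

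Next I would form the linear form in logarithms
$$\Lambda := l\log b - m\log c + \log(b-k^2) - \log(c-k^2).$$
Dividing the inequality of Lemma \ref{lem:sandwich-inequalities} by $c^m(c-k^2)$ and using $|\log(1+u)| \leq 2|u|$ for $|u| \leq 1/2$ (which holds once $m$ is moderately large, a consequence of Lemma \ref{lem:m-lower-bound}) produces the upper bound
$$|\Lambda| \leq \frac{2k^2}{c^m(c-k^2)} \leq 2k^{2-2m}.$$
A short side argument rules out $\Lambda = 0$: if $b^l(b-k^2) = c^m(c-k^2)$, then since $b \nmid c$ (because $b < c < 2b$), there is a prime power $p^e$ exactly dividing $b$ with $p^e \nmid c$; comparing $p$-adic valuations on both sides and using $l > m$ forces $p^{m+1}$ to divide $c-k^2 \leq k$, hence $m+1 \leq \log_2 k$, contradicting Lemma \ref{lem:m-lower-bound}.

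With $\Lambda \neq 0$ and the explicit upper bound in hand, I would invoke Matveev's theorem on linear forms in logarithms of algebraic numbers, applied to the four rationals $b, c, b-k^2, c-k^2$ (all of degree $1$) with integer coefficients $l, -m, 1, -1$. Using $\log b, \log c \leq 2\log(k+1)$, heights $h(b-k^2), h(c-k^2) \leq \log k$, and $\max(l,m) \leq 2m$, Matveev's effective estimate yields a lower bound of the shape
$$\log|\Lambda| \geq -C_0 (\log k)^4 \log(2m)$$
for an explicit absolute constant $C_0$. The edge cases $b-k^2 = 1$ or $c-k^2 = 1$ collapse to a three-logarithm version of Matveev, which in fact gives a smaller constant, so they pose no obstacle.

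Chaining the two bounds on $|\Lambda|$ and inserting $m > k\log k$ from Lemma \ref{lem:m-lower-bound}, the entire chain reduces to an inequality of the form
$$k \log^2 k \leq C_1 (\log k)^4 \log(k\log k),$$
which fails for every $k$ exceeding an explicit threshold $k_0$. Translating $k > k_0$ back into a lower bound on $b$ via $b \geq k^2 + 1$ recovers a bound of the form stated. The main obstacle is computational rather than conceptual: since any slack in the comparison inflates $k_0$ (and hence the resulting threshold on $b$), recovering the precise constant $1.185\times 10^{29}$ demands careful bookkeeping of Matveev's explicit numerical constant through every step of the comparison, along with a meticulous treatment of the low-height cases $b-k^2 \in \{1,2\}$ and $c-k^2\in\{k-1,k\}$.
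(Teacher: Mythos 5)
Your proposal is correct and follows essentially the same strategy as the paper: combine the sandwich inequality of Lemma~\ref{lem:sandwich-inequalities} with the lower bound $m>k\log k$ of Lemma~\ref{lem:m-lower-bound}, rule out the vanishing of the linear form by a $p$-adic valuation argument (as in Proposition~\ref{thm:common-factor}), and then apply Matveev's theorem to force a contradiction for large $k$. The one place where you give ground numerically is in applying Matveev to a four-term linear form in $\log b, \log c, \log(b-k^2), \log(c-k^2)$; the paper instead groups the last two into the single algebraic number $\alpha_3=(c-k^2)/(b-k^2)$ and uses the three-variable version, which both replaces $C_4$ by the smaller $C_3$ and drops a factor of $\log k$ in the exponent (giving $(\log k)^3$ rather than $(\log k)^4$), and this tighter bookkeeping is what produces the stated threshold $1.185\times10^{29}$.
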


The proof of this proposition uses a theorem of Matveev \cite{Matveev}, which is a strengthening of Baker's theorem (see, e.g. \cite{Baker-Wustholz}). These theorems give lower bounds for the absolute value of expressions of the form
\begin{equation} \label{eq:log-linear-form}
\Lambda:=\beta_1\log\alpha_1+\beta_2\log\alpha_2+\dots+\beta_n\log\alpha_n
\end{equation}
provided $\Lambda\neq 0$, where $\alpha_1,\dots,\alpha_n$ are algebraic numbers and $\beta_1,\dots,\beta_n$ are rational integers. Since we only need Matveev's result for the case when $\alpha_1,\dots,\alpha_n$ are in fact rational numbers, and the general theorem is a bit technical, we state here only the special case that we will use to prove Proposition \ref{prop:very-large-b}. For $\alpha\in\QQ$, let $h(\alpha)$ denote the logarithmic Weil height of $\alpha$; that is, $h(\alpha)=\log\max\{|p|,|q|\}$ if $\alpha=p/q$ in lowest terms.

\begin{lemma}[Matveev] \label{lem:Matveev}
Let $\alpha_1,\dots,\alpha_n$ be rational numbers, not 0 or 1, and $\beta_1,\dots,\beta_n$ be integers. Let $\Lambda$ be defined by \eqref{eq:log-linear-form} and assume $\Lambda\neq 0$. Then
\[
|\Lambda|\geq\exp\{-C_n h(\alpha_1)\cdots h(\alpha_n)\log(eB)\},
\]
where $B:=\max\{|\beta_1|,\dots,|\beta_n|\}$ and
\[
C_n:=\min\left\{\frac{e}{2}\cdot 30^{n+3}n^{4.5},2^{6n+20}\right\}.
\]
\end{lemma}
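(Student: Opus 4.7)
The plan is to derive a contradiction via Matveev's theorem (Lemma~\ref{lem:Matveev}), applied to a small linear form in the logarithms of $b$, $c$, $j_1 := b-k^2$, and $j_2 := c-k^2$. Assume, for contradiction, that $x=[0;k,a_2,a_3,\dots]\in T_b\cap T_c$ with $c>b>1.185\times 10^{29}$. Lemma~\ref{lem:more-digits} forces $b,c\in[k^2+1,k^2+k]$ with $l>m\geq 2$, where $l,m$ are the digit-lengths of $a_2$ in bases $b$ and $c$; Proposition~\ref{thm:common-factor} lets us assume $\gcd(b,c)=1$; and Lemma~\ref{lem:sandwich-inequalities} gives $|j_1 b^l-j_2 c^m|<k^2$.

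Set $\Lambda := l\log b+\log j_1-m\log c-\log j_2$. First I would verify $\Lambda\neq 0$: if $j_1 b^l = j_2 c^m$, then coprimality of $b$ and $c$ forces $c^m \mid j_1 \leq k < c$, impossible for $m \geq 1$. Rewriting $e^\Lambda - 1 = (j_1 b^l - j_2 c^m)/(j_2 c^m)$ and noting that $c^m \geq (k^2+1)^2 > 2k^2$ yields the upper bound $|\Lambda| < 2k^2/c^m$. For the lower bound, I would apply Lemma~\ref{lem:Matveev} with $n\leq 4$ (dropping any $\log j_i$ when $j_i = 1$, which only shrinks the height product), heights $h(j_i) \leq \log k$ and $h(b), h(c) \leq 2\log k + 1$, and $B = l$, obtaining
\[
|\Lambda| \geq \exp\{-4C_4 (\log k)^4 \log(el)\}.
\]
Equating the two estimates gives $m \leq O\!\left(C_4 (\log k)^3 \log(el)\right)$. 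To close the loop on $l$, the inequality $j_1 b^l < 2 j_2 c^m$ combined with $\log(c/b) \leq k/b \leq 1/k$ forces $l \leq m + O(1)$ (indeed $l = m+1$ once $k$ is large), so $\log(el) \leq \log m + O(1)$; iterating yields $m = O\!\left(C_4 (\log k)^3 \log\log k\right)$.

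Finally, Lemma~\ref{lem:m-lower-bound} supplies $m > k\log k$, which contradicts the previous bound once $k$ exceeds an explicit threshold determined by $C_4 = \min\{\tfrac{e}{2}\cdot 30^7\cdot 4^{4.5},\, 2^{44}\} \approx 1.52\times 10^{13}$; matching the stated cutoff $b > 1.185\times 10^{29}$ is then a direct numerical computation. The main obstacle is tuning the chain of constants---in particular sharpening $l \leq m+O(1)$ and exploiting the drop from $n=4$ to $n=3$ when $j_1=1$ or $j_2=1$---precisely enough that the numerical threshold comes out at the advertised value; the structural content of the argument is otherwise dictated entirely by Lemmas~\ref{lem:sandwich-inequalities}, \ref{lem:more-digits}, \ref{lem:m-lower-bound}, Proposition~\ref{thm:common-factor}, and Matveev's theorem.
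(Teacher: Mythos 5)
You have misidentified the statement. Lemma~\ref{lem:Matveev} is Matveev's theorem itself --- an extremely deep result in transcendence theory, quoted verbatim (in the special rational case) from Matveev's paper~\cite{Matveev}. The paper gives no proof of it; it is a cited external theorem, a quantitative refinement of Baker's theorem on linear forms in logarithms, and proving it would require the full machinery of Baker's method. Your proposal is instead a proof sketch of Proposition~\ref{prop:very-large-b}, i.e.\ the \emph{application} of Matveev's lemma to rule out $T_b\cap T_c\neq\emptyset$ for large $b$. That is a different statement.

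Since you have in effect attempted Proposition~\ref{prop:very-large-b}, a brief comparison with the paper's actual proof of that proposition is worth recording. Your overall architecture is the same (Lemma~\ref{lem:sandwich-inequalities} for the upper bound on $|\Lambda|$, Matveev for the lower bound, Lemma~\ref{lem:m-lower-bound} for the contradiction), but there is a consequential inefficiency: you split the linear form into up to four logarithms, $\log b$, $\log c$, $\log j_1$, $\log j_2$, so you must invoke Matveev with $n=4$, giving $C_4\approx 1.52\times 10^{13}$. The paper instead folds the quantities $j_1=b-k^2$, $j_2=c-k^2$ into a single rational $\alpha_3=(c-k^2)/(b-k^2)$, applying Matveev with $n=3$ and $C_3\approx 1.39\times 10^{11}$, roughly two orders of magnitude smaller. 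Because the final numeric threshold is extremely sensitive to $C_n$, your variant would yield a noticeably worse bound than $1.185\times 10^{29}$. Also, the excursion through ``$l\leq m+O(1)$'' is unnecessary: the paper uses $l\geq k\log k+1$ directly (from Lemma~\ref{lem:m-lower-bound} together with $l>m$) and takes $B=l$, after which the inequality $(l-1)/\log l < 2C_3'(\log k)^2$ closes the argument. None of this, however, addresses the statement you were actually asked about, which the paper neither proves nor claims to prove.
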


\begin{proof}[Proof of Proposition \ref{prop:very-large-b}]
We use Matveev's theorem in much the same way as in He and Togb\'e \cite{He-Togbe}. Assume $T_b\cap T_c\neq\emptyset$; then there is an integer $k\geq 3$ such that $k^2<b<c\leq k^2+k$. Suppose $x=[0;a_1,a_2,\dots]\in T_b\cap T_c$. Let $l$ and $m$ be the number of digits of $a_2$ in base $b$ and $c$, respectively. 
Recall from the proof of Proposition \ref{thm:common-factor} that $b^l(b-k^2)-c^m(c-k^2)\neq 0$. Assume without loss of generality that $c^m(c-k^2)>b^l(b-k^2)$. Let
\[
\Lambda:=m\log c-l\log b+\log\left(\frac{c-k^2}{b-k^2}\right).
\]
Applying Lemma \ref{lem:Matveev} with $\alpha_1=c$, $\alpha_2=b$, $\alpha_3=(c-k^2)/(b-k^2)$, $\beta_1=m$, $\beta_2=-l$ and $\beta_3=1$ we obtain
\begin{align*}
\frac{c^m(c-k^2)}{b^l(b-k^2)}-1&=e^{\Lambda}-1>\Lambda\\
&\geq \exp\{-C_3 \log b \log c \log(c-k^2) \log(el)\}\\
&\geq \exp\{-C_3 \big(\log(k^2+k)\big)^2 \log k \log(el)\}=:A_k(l).
\end{align*}
Combining this with \eqref{eq:k-square-bound}, we deduce that
\[
k^2>c^m(c-k^2)-b^l(b-k^2)\geq A_k(l) b^l(b-k^2)\geq A_k(l)(k^2+1)^l>A_k(l)k^{2l},
\]
and hence, $\log A_k(l)+2(l-1)\log k<0$. Observe that $C_3=(e/2)30^6 3^{4.5}\approx 1.3901\times 10^{11}$. Let $C_3':=1.3902\times 10^{11}$. Since our assumption on $b$ implies $k>3.4422\times 10^{14}$ and $l\geq k\log k+1$ by Lemma \ref{lem:m-lower-bound}, it is easy to see that
\[
A_k(l)\geq \exp\{-4C_3'(\log k)^3 \log l\}.
\]
Thus, we obtain
\[
2(l-1)\log k<4C_3'(\log k)^3\log l,
\]
which can be written as
\begin{equation} \label{eq:l-bound}
\frac{l-1}{\log l}<2C_3'(\log k)^2.
\end{equation}
Here the left hand side is increasing in $l$, and since $l\geq k\log k+1$, it follows that
\[
2C_3'(\log k)^2>\frac{k\log k}{\log(k\log k+1)},
\]
and therefore,
\[
2C_3'\log k\log(k\log k+1)>k.
\]
However, one verifies numerically that this inequality fails for $k\geq 3.4422\times 10^{14}$.
\end{proof}

\begin{remark}
{\rm
For $k<3.4422\times 10^{14}$, \eqref{eq:l-bound} gives an implicit upper bound for $l$. Thus, theoretically, the verification of the conjecture is now reduced to a finite number of operations, as there are only finitely many pairs $(b,c)$ left to check, and for each such pair, only finitely many pairs $(l,m)$ to try. However, the number is too large for such a brute force strategy to be feasible.
}
\end{remark}

\begin{proof}[Proof of Theorem \ref{thm:multiple-Trott}]
The theorem follows from Propositions \ref{prop:both-extremes}, \ref{prop:one-apart}, \ref{thm:common-factor}, \ref{prop:difference-of-two} and \ref{prop:very-large-b}.  
\end{proof}

\section*{Acknowledgements}
This work grew from an undergraduate research project at the University of North Texas. We thank Professor Lior Fishman for many helpful discussions and suggestions. We also thank Mercedes Byberg for finding examples of Trott numbers, and Pranoy Dutta for writing code to search for Trott Numbers. These examples were instrumental in beginning this research.

\end{document}